\newcommand{\Ab}{\mathbf A}
\newcommand{\Fb}{\mathbf F}
\newcommand{\Bb}{\mathbf B}
\newcommand{\ab}{\mathbf a}
\newcommand{\R}{\mathbb R}
\newcommand{\C}{\mathbb C}
\DeclareMathOperator{\IM}{Im}
\DeclareMathOperator{\curl}{curl}
\DeclareMathOperator{\Div}{div}
\newtheorem{thm}{Theorem}[section]
\newtheorem{prop}[thm]{Proposition}
\newtheorem{corol}[thm]{Corollary}
\theoremstyle{remark}
\newtheorem{rem}[thm]{Remark}
\numberwithin{equation}{section}
\title[3D Ginzburg-Landau  functional]{The ground state energy of the three dimensional
Ginzburg-Landau model in the mixed phase}
\address{$^{\mathrm A}$ Lebanese University, Department of Mathematics, Hadath, Beirut, Lebanon}
\address{$^{\mathrm B}$ University of Aarhus, Department of
Mathematics, 1530 NyMunkegade, 8000 Aarhus C}
\email{ayman.kashmar@liu.edu.lb}
\author[A. Kachmar]{Ayman Kachmar\,$^{\mathrm a,\mathrm b}$}
\date{\today}
\keywords{Ginzburg-Landau functional, thermodynamic limits, elliptic
  estimates, variational methods,  semi-classical analysis}
\begin{document}
\begin{abstract}
We consider the Ginzburg-Landau functional defined over a bounded and smooth three dimensional  domain.
Supposing that the strength of the applied magnetic field
 varies between the first and second  critical fields, in such a way that $H_{C_1}\ll H\ll H_{C_2}$,
we estimate the ground state energy to leading order as the Ginzburg-Landau parameter tends to infinity.
\end{abstract}
\maketitle

\section{Introduction and main result}
We consider a bounded and open set $\Omega\subset\R^3$ with smooth boundary. We
 suppose that $\Omega$ models a superconducting sample subject to an applied external magnetic field.
The energy of
the sample is
given by the Ginzburg-Landau functional,
\begin{multline}\label{eq-3D-GLf}
\mathcal E^{\rm 3D}(\psi,\Ab)=\mathcal
E_{\kappa,H}^{\rm 3D}(\psi,\Ab)=
\int_\Omega\bigg(
|(\nabla-i\kappa
H\Ab)\psi|^2+\frac{\kappa^2}2(1-|\psi|^2)^2\bigg)\,dx\\
+\kappa^2H^2\int_{\R^3}|\curl\Ab-\beta|^2\,dx\,.
\end{multline}
Here $\kappa$ and $H$ are two positive parameters, the wave function
(order parameter)
$\psi\in H^1(\Omega;\C)$, the induced magnetic potential $\Ab\in
\dot{H}^1_{\Div,\Fb}(\R^3)$, where $\dot H^1_{\Div,\Fb}(\R^3)$ is the space introduced in
\eqref{eq-3D-hs} below. Finally, $\beta$ is the external magnetic
field that we choose constant, $\beta=(0,0,1)$.

Let $\dot H^1(\R^3)$ be the homogeneous sobolev space, i.e. the closure
of $C_c^\infty(\R^3)$ under the norm $u\mapsto\|u\|_{\dot
  H^1(\R^3)}:=\|\nabla u\|_{L^2(\R^3)}$. Let further
  $\Fb(x)=(-x_2/2,x_1/2,0)$. Clearly $\Div \Fb=0$.

We define the space,
\begin{equation}\label{eq-3D-hs}
\dot H^1_{\Div,\Fb}(\R^3)=\{\Ab~:~\Div \Ab=0\,,\quad~{\rm and}\quad
\Ab-\Fb\in \dot H^1(\R^3)\}\,.
\end{equation}

Critical points $(\psi,\Ab)\in H^2(\Omega;\C)\times \dot
H^1_{\Div,\Fb}(\R^3)$ of $\mathcal E ^{\rm 3D}$
satisfy the Ginzburg-Landau equations,
\begin{equation}\label{eq-3D-GLeq}
\left\{
\begin{array}{lll}
-(\nabla-i\kappa H\Ab)^2\psi=\kappa^2(1-|\psi|^2)\psi&{\rm in}& \Omega
\\
\curl^2\Ab=-\displaystyle\frac1{\kappa H}\IM(\overline{\psi}\,(\nabla-i\kappa
H\Ab)\psi)\mathbf 1_\Omega&{\rm in}& \R^3\\
\nu\cdot(\nabla-i\kappa H\Ab)\psi=0&{\rm on}&\partial\Omega\,,
\end{array}\right.\end{equation}
where $\mathbf 1_\Omega$ is the characteristic function of the domain $\Omega$, and $\nu$ is the pointing interior unit normal vector  of $\partial\Omega$.

For a solution $(\psi,\Ab)$ of \eqref{eq-3D-GLeq}, the function $\psi$
describes the superconducting properties of the material and
$H\curl\Ab$ gives the induced magnetic field. The number $\kappa$ is
a material parameter, and the number $H$ is the intensity of a constant
magnetic field externally applied to the sample.

In the mathematics literature, Type~II superconductors usually
correspond to the limit $\kappa\to\infty$, see \cite{FH-b, SS}. In
this regime one distinguishes three critical values $H_{C_1}$,
$H_{C_2}$ and $H_{C_3}$ for the applied field. Those critical fields
  are roughly described as follows. If $H<H_{C_1}$, the material is in
  the superconducting phase. Mathematically, this corresponds to
  $|\psi|>0$ for any minimizer $(\psi,\Ab)$ of \eqref{eq-3D-GLf}.
If $H_{C_1}<H<H_{C_2}$, the magnetic field penetrates the sample in
quantized vortices (corresponding to zeros of $\psi$). If $H_{C_2}<H<H_{C_3}$,
superconductivity is confined to the surface of the sample
(corresponding to $|\psi|$ very small in the bulk). Finally, if
$H>H_{C_3}$, superconductivity is lost, which is reflected by $\psi=0$
everywhere in $\Omega$. In this paper, we will focus on the regime
when the applied magnetic field varies between $H_{C_1}$ and $H_{C_2}$. In the scaling we choose in this paper, this regime corresponds to $\ln\kappa/\kappa\ll H\ll\kappa$ as $\kappa\to\infty$. Here, if $a(\kappa)$ and $b(\kappa)$ are two positive functions, the notation $a(\kappa)\ll b(\kappa)$ means that $a(\kappa)/b(\kappa)\to 0$ as $\kappa\to\infty$.

In the case of  two dimensional domains, which correspond to
infinite cylindrical superconducting samples, there exists
a quite satisfactory analysis of the critical fields $H_{C_1}$,
$H_{C_2}$ and $H_{C_3}$. As we can not give an exhaustive list of
references,
we invite the reader to see the monographs \cite{FH-b, SS}, where a
detailed review of the material is present.  Still in the two dimensional setting, the most accurate available characterization of the critical field $H_{C_2}$ is given in \cite{FK, FK1}.

The situation is less understood in three dimensions, especially the regime of  magnetic fields close to the first critical field $H_{C_1}$. For a superconductor occupying a ball domain, a candidate for the expression of the critical field $H_{C_1}$ is given in \cite{ABM}.  Related results are obtained for superconducting shells in \cite{CS}. For general domains,
the analysis of the critical field $H_{C_3}$ started   in \cite{LP}, then a sharp characterization of $H_{C_3}$ is given in \cite{FH3d}. In the papers \cite{Al, Pa}, it is proved that superconductivity is  confined to the surface of the domain, provided that
 magnetic field is close to and below $H_{C_3}$. A fine characterization of the critical field $H_{C_2}$ together with leading order estimates of the ground state energy in large magnetic fields are recently obtained in \cite{FK3D, FKP}.
This paper is  complementary to those in \cite{FK3D,FKP}.

The ground state energy of the functional in \eqref{eq-3D-GLf} is defined as follows,
\begin{equation}\label{eq-3D-gs}
C_0(\kappa,H)=\inf\big\{
\mathcal E^{\rm 3D}(\psi,\Ab)~:~(\psi,\Ab)\in H^1(\Omega;\C)\times \dot H^1_{\Div,\Fb}(\R^3)\big\}\,.
\end{equation}
The main result of this paper is Theorem~\ref{thm-3D-main} below. It is a generalization of an analogous  result proved for the two-dimensional functional in \cite{SS00}.

\begin{thm}\label{thm-3D-main}
Suppose that  the magnetic field $H$ is a function of $\kappa$ and satisfies
$$\frac{\ln\kappa}{\kappa}\ll H\ll\kappa\,,\quad{\rm as}\quad \kappa\to\infty\,.$$
 Then, the ground state energy in \eqref{eq-3D-gs} satisfies,
\begin{equation}\label{eq-3D-thm}
C_0(\kappa,H)=|\Omega|\,\kappa H\ln\sqrt{\frac{\kappa}{H}}+o\bigg(\kappa H\ln\sqrt{\frac{\kappa}{H}} \,\bigg)\,,\quad{\rm as}\quad \kappa\to\infty\,.
\end{equation}
\end{thm}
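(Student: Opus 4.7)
The plan is to establish matching upper and lower bounds for $C_0(\kappa,H)$, exploiting the fact that the applied field $\beta=(0,0,1)$ is aligned with a fixed axis in order to reduce the three-dimensional problem to the two-dimensional ground state estimate of \cite{SS00} on horizontal slices of $\Omega$.

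\textbf{Upper bound.} I would construct a trial pair $(\psi,\Ab)$ by tiling the bulk of $\Omega$ with vertical parallelepipeds $Q_j=D_j\times J_j$ of cross-sectional scale $\ell(\kappa)$ satisfying $(\kappa H)^{-1/2}\ll \ell\ll 1$, which places many expected vortices per tile while keeping each tile small relative to $\Omega$. On each $Q_j$ I take an $x_3$-independent configuration $\psi(x_1,x_2,x_3)=\psi^{\rm 2D}_j(x_1,x_2)$ and $\Ab(x_1,x_2,x_3)=(\ab^{\rm 2D}_j(x_1,x_2),0)$, where $(\psi^{\rm 2D}_j,\ab^{\rm 2D}_j)$ is a nearly optimal Sandier--Serfaty configuration on the square $D_j$. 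Since the $x_3$-derivative terms then vanish and $\curl(\ab^{\rm 2D}_j,0)$ is vertical and close to $1$, the contribution of $Q_j$ is $(1+o(1))|Q_j|\,\kappa H\ln\sqrt{\kappa/H}$ by the 2D result. In the boundary layer of $\Omega$ of width $o(1)$ and in the thin interstices between tiles, set $\psi=0$ and smoothly interpolate $\Ab$ to the reference potential $\Fb$ on $\R^3$; the cost of this transition is controlled and can be arranged to be $o\!\left(\kappa H\ln\sqrt{\kappa/H}\right)$.

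\textbf{Lower bound.} Let $(\psi,\Ab)$ be a minimizer, and write $\psi_z(\cdot)=\psi(\cdot,\cdot,z)$ together with $\ab(\cdot,z)=(A_1,A_2)(\cdot,\cdot,z)$. Dropping the nonnegative $x_3$-derivative terms in the kinetic energy and keeping only the vertical component $\partial_1 A_2-\partial_2 A_1-1$ of the curl, Fubini yields
\begin{equation*}
\mathcal E^{\rm 3D}(\psi,\Ab)\ge \int_{\R} \mathcal E^{\rm 2D}_{\kappa,H}\!\left(\psi_z,\ab(\cdot,z);\Omega_z\right)\md z,
\end{equation*}
where $\Omega_z=\{(x_1,x_2):(x_1,x_2,z)\in\Omega\}$ and $\mathcal E^{\rm 2D}_{\kappa,H}$ denotes the two-dimensional Ginzburg-Landau functional. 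By Fubini, for almost every $z$ the pair $(\psi_z,\ab(\cdot,z))$ is admissible for the 2D functional, so the estimate of \cite{SS00} applies slicewise to give $\mathcal E^{\rm 2D}_{\kappa,H}(\psi_z,\ab(\cdot,z);\Omega_z)\ge |\Omega_z|\,\kappa H\ln\sqrt{\kappa/H}\,(1+o(1))$, and integration in $z$ using $\int_\R|\Omega_z|\md z=|\Omega|$ produces the matching lower bound.

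\textbf{Main obstacle.} The delicate point is to make the slicewise application of the 2D lower bound \emph{quantitative and uniform} in $z$, since the error terms of \cite{SS00} depend on the geometry of the underlying 2D domain. For slices near the extremal heights of $\Omega$ the area $|\Omega_z|$ degenerates and the 2D bound may fail to hold with the required precision, so one has to (i) verify that the "bad" set of $z$ contributes only at lower order, (ii) match the slicewise admissibility class of \cite{SS00} for a.e. $z$ (which uses a priori bounds on $\|\curl\Ab-\beta\|_{L^2(\R^3)}$ obtained from the upper bound), and (iii) exploit the hypothesis $\ln\kappa/\kappa\ll H\ll\kappa$ to ensure all two-dimensional error terms sum to $o\!\left(\kappa H\ln\sqrt{\kappa/H}\right)$. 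Balancing the choice of $\ell(\kappa)$ in the upper bound against the geometric tolerance needed in the slicewise lower bound is the technical heart of the argument.
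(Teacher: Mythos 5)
Your overall architecture (matching upper and lower bounds via reduction to two-dimensional Sandier--Serfaty-type results) is the right one, but both halves, as written, have genuine gaps. For the upper bound: any transition region in which $|\psi|$ is driven from order $1$ down to $0$ across a surface of area $S$ costs at least $c\,S\,(w^{-1}+\kappa^{2}w)\geq c'\kappa S$ for every transition width $w$, because the gradient term and the term $\frac{\kappa^2}{2}(1-|\psi|^2)^2$ cannot both be made small. Hence your boundary layer with $\psi=0$ costs at least $c\kappa$, and the interstices between tiles of side $\ell$ cost at least $c\kappa/\ell$. In the regime considered here $H$ may be as small as barely above $\ln\kappa/\kappa$, so the target $\kappa H\ln\sqrt{\kappa/H}$ can be as small as $O\big((\ln\kappa)^2\big)$, and in fact $H\ln\sqrt{\kappa/H}\to0$ is allowed (e.g. $H=\kappa^{-1/2}$); then $\kappa$, let alone $\kappa/\ell$, is \emph{not} $o\big(\kappa H\ln\sqrt{\kappa/H}\big)$ for any admissible $\ell\ll1$, so the ``kill $\psi$ and interpolate'' step destroys the upper bound in the low-field part of the regime. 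Moreover, gluing the tilewise potentials $\ab_j^{\rm 2D}$ and interpolating to $\Fb$ produces an uncontrolled contribution to $\kappa^2H^2\int_{\R^3}|\curl\Ab-\beta|^2$. The paper avoids all of this by taking $\Ab\equiv\Fb$ (so the magnetic term vanishes identically) and, for $\psi$, a single $x_3$-independent, \emph{magnetic-periodic} vortex-lattice minimizer of the fixed-potential problem $m_{\rm p}(h_{\rm ex},\varepsilon)$ of Section~\ref{sec:LP}, rescaled and defined on all of $\Omega$: no dead zones, no patching, no interpolation.

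For the lower bound, your Fubini step is legitimate, but the slicewise use of \cite{SS00} requires the leading-order lower bound for the \emph{infimum} of the coupled 2D functional to hold with an error $o(1)$ that is uniform over the whole family of cross-sections $\Omega_z$, whose shapes vary with $z$ and degenerate near the extremal heights. The result of \cite{SS00} is stated for a fixed domain, and discarding a small bad set of $z$ only disposes of the degenerate slices, not of the uniformity over the remaining continuum of domains; this is exactly the point you flag as the ``main obstacle'', and it is the part that would have to be proved. The paper's mechanism is different and bypasses it: from the upper bound it deduces $\|\curl(\Ab-\Fb)\|_{L^2(\R^3)}\lesssim(\kappa H)^{-1/2}\big(\ln\sqrt{\kappa/H}\big)^{1/2}$, upgrades this by curl--div elliptic regularity to a $C^{0,1/2}$ bound on $\Ab-\Fb$ (Proposition~\ref{prop-ap}), then on cubes of side $\ell$ gauges $\Ab$ to $\Fb$ up to a controllable error, reducing to the fixed-potential cube problem $M_0(b,R)$; only inside that cube does it slice in $x_3$, where every slice is the same square, so the only 2D input needed is the fixed-potential bound on the unit square (Theorem~\ref{thm-SS00}), with no uniformity issue and no reliance on the full coupled 2D result. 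Without such an a priori freezing of the potential, your plan rests on an unproven uniform-in-domain strengthening of \cite{SS00}.
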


As immediate consequences of Theorem~\ref{thm-3D-main} we obtain that, if $(\psi,\Ab)$ is a minimizer of
\eqref{eq-3D-GLf}, then the induced magnetic field $\curl \Ab$ is close to the applied magnetic field $\beta$, and that the magnitude of the order parameter $|\psi|$ is close to $1$ almost everywhere in $\Omega$. The physical meaning of this is that the applied magnetic field penetrates the sample almost everywhere and concentrates along `vortex lines'. On these vortex lines the order parameter $\psi$ is expected to have  zeros (this is not rigorously proved in this paper), but away of them, the sample remains in the superconducting phase $(|\psi|$ is close to $1$). Therefore,  the regime considered in Theorem~\ref{thm-3D-main} corresponds to what is actually named in the physics literature as the {\it mixed phase}.

In the course of the proof of Theorem~\ref{thm-3D-main}, we obtain the following conclusions as immediate corollaries.

\begin{corol}\label{corol-main}
Under the assumptions made in Theorem~\ref{thm-3D-main},
if $(\psi,\Ab)\in H^1(\Omega;\C)\times \dot H^1_{\Div}(\R^3)$ is a minimizer of the energy in \eqref{eq-3D-GLf}, then, as $\kappa\to\infty$,
\begin{align}
&\curl\Ab-\beta\to 0\quad{\rm in}~H^1(\R^3;\R^3)\,,\\
&e_{\kappa,H}(\psi,\Ab)\to dx\quad{\rm in}~\mathcal D'(\Omega)\,,\\
&\mu_{\kappa,H}(\psi,\Ab)\to (0,0,dx)\quad {\rm in}~\mathcal D'(\Omega;\R^3)\,.
\end{align}
Here, $dx$ is the Lebesgue measure in $\Omega$,  the measure $e_{\kappa,H}(\psi,\Ab)$ and the current  $\mu_{\kappa, H}(\psi,\Ab)$ are respectively,
\begin{align}
&e_{\kappa,H}(\psi,\Ab)=\frac{\left(|(\nabla-i\kappa H\Ab)\psi|^2+\frac{\kappa^2}2(1-|\psi|^2)^2\right)}{\kappa H\ln\sqrt{\kappa/H}}\,dx\,,\\
&\mu_{\kappa,H}(\psi,\Ab)=\curl\bigg(-\displaystyle\frac1{\kappa H}\IM(\overline{\psi}\,(\nabla-i\kappa
H\Ab)\psi)\bigg)+\curl\Ab\,.
\end{align}
\end{corol}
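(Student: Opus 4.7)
The plan is to read off all three conclusions directly from the sharp identity $\mathcal E^{\mathrm{3D}}(\psi,\Ab)=C_0(\kappa,H)=|\Omega|\kappa H\ln\sqrt{\kappa/H}\,(1+o(1))$ supplied by Theorem~\ref{thm-3D-main}, combined with the Ginzburg--Landau equations~\eqref{eq-3D-GLeq} and the pointwise bound $|\psi|\le 1$, which follows from the maximum principle applied to the first equation in~\eqref{eq-3D-GLeq}.

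For~(1), the term $\kappa^2H^2\int_{\R^3}|\curl\Ab-\beta|^2\,dx$ appearing in~\eqref{eq-3D-GLf} is dominated by the total energy, so
\[
\|\curl\Ab-\beta\|_{L^2(\R^3)}^2\le \frac{C_0(\kappa,H)}{\kappa^2H^2}=O\!\left(\frac{\ln\sqrt{\kappa/H}}{\kappa H}\right)\longrightarrow 0,
\]
because $\kappa H\gg\ln\kappa$ under the hypothesis on $H$. To upgrade this to $H^1$, I would set $V=\Ab-\Fb\in\dot H^1(\R^3)$ and invoke the Plancherel identity $\|\nabla\curl V\|_{L^2(\R^3)}^2=\|\curl^2V\|_{L^2(\R^3)}^2$, which is valid for divergence-free fields because $\Div\curl=0$. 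The second GL equation together with $|\psi|\le1$ then gives
\[
\|\curl^2\Ab\|_{L^2(\R^3)}\le\frac{1}{\kappa H}\|(\nabla-i\kappa H\Ab)\psi\|_{L^2(\Omega)}=O\!\left((\ln\sqrt{\kappa/H}/(\kappa H))^{1/2}\right)=o(1).
\]

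The current convergence~(3) follows almost automatically from~(1). For any $\phi\in C_c^\infty(\Omega;\R^3)$, integration by parts gives
\[
\langle\mu_{\kappa,H},\phi\rangle=\int_\Omega j\cdot\curl\phi\,dx+\int_\Omega\curl\Ab\cdot\phi\,dx,
\]
where $j=-(\kappa H)^{-1}\IM(\bar\psi(\nabla-i\kappa H\Ab)\psi)$. Cauchy--Schwarz and $|\psi|\le1$ yield $\|j\|_{L^2(\Omega)}^2\le(\kappa H)^{-2}\int_\Omega|(\nabla-i\kappa H\Ab)\psi|^2\,dx=O(\ln\sqrt{\kappa/H}/(\kappa H))=o(1)$, which kills the first integral, while the $L^2$ convergence $\curl\Ab\to\beta$ from~(1) sends the second to $\int_\Omega\beta\cdot\phi\,dx=\int_\Omega\phi_3\,dx$, identifying the limit as the vector measure $(0,0,dx)$.

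The energy-density convergence~(2) is the delicate point. The strategy is to extract from the proof of Theorem~\ref{thm-3D-main} a \emph{local} lower bound
\[
\int_\omega\Bigl(|(\nabla-i\kappa H\Ab)\psi|^2+\tfrac{\kappa^2}2(1-|\psi|^2)^2\Bigr)\,dx\ge|\omega|\,\kappa H\ln\sqrt{\kappa/H}\,(1+o(1))
\]
valid for every smooth subdomain $\omega\subset\subset\Omega$, obtained by partitioning $\Omega$ into mesoscopic cubes on each of which an Abrikosov-type non-magnetic reduction is applied independently. Subtracting this local lower bound from the global identity of Theorem~\ref{thm-3D-main} forces a matching local upper bound on the complementary subdomain, and the distributional convergence $e_{\kappa,H}\to dx$ then follows by sandwiching a test function $\chi\in C_c^\infty(\Omega)$ between indicator functions of two nested such subdomains. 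The main obstacle here is ensuring that the error term in the local lower bound is uniform in $\omega$; I expect this to come essentially for free from whichever localization scheme underlies the proof of Theorem~\ref{thm-3D-main} itself.
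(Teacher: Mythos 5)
Your proposal is correct and follows essentially the paper's own route: the ``local lower bound'' you propose to extract is exactly Theorem~\ref{thm-lb}, which the paper already states for an arbitrary smooth subdomain $D\subset\Omega$, so the complement-plus-global-upper-bound sandwich you describe is precisely the paper's argument for $e_{\kappa,H}\to dx$, and no uniformity in $\omega$ is actually needed since each fixed test function requires only finitely many subdomains together with the convergent total mass. The only (harmless) deviations are in part (3), where you pair $\mu_{\kappa,H}$ directly with a test field and use $\|j\|_{L^2(\Omega)}=o(1)$ instead of the paper's identity $-\Delta\Bb+\Bb=\mu_{\kappa,H}$ combined with the $H^1$ convergence of $\Bb$, and in part (1), where your explicit chain $\|\nabla(\curl\Ab-\beta)\|_{L^2(\R^3)}=\|\curl^2\Ab\|_{L^2(\R^3)}\le(\kappa H)^{-1}\|(\nabla-i\kappa H\Ab)\psi\|_{L^2(\Omega)}=o(1)$ supplies the details that the paper leaves implicit.
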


In two dimensions, $\mu_{\kappa,H}$ is a measure and it is proved that it gives the density of vortices, hence it is called
the {\it vorticity measure}, see \cite{SS}.

The proof of Theorem~\ref{thm-3D-main} is obtained as follows. First we start by the analysis of an approximate problem in a
`large' cube. The cube geometry allows us to link this problem to another two dimensional problem in a square. The later is analyzed using tools from \cite{SS}.

Using a ground state of the approximate problem, we   construct a test configuration whose energy provides an upper bound of the ground state energy $C_0(\kappa,H)$.
As a consequence of this upper bound, we obtain that, for a minimizer $(\psi,\Ab)$ of \eqref{eq-3D-GLf}, the induced magnetic field $\curl\Ab$ is close to the applied field $\beta$ in $L^2$-norm. Using this and the regularity of the $\curl$-$\Div$ system in $\R^3$, we get an estimate of $\Ab-\Fb$ in $C^{0,1/2}$-norm.

The  {\it a priori} estimates obtained for minimizers allow us to determine a lower bound of the energy that matches with the obtained upper bound. Actually, we use the `semi-classical' localization techniques developed in \cite{FH-b}
to reduce the problem to that of the approximate problem in a cube. Then the analysis of the later problem is used to obtain the matching lower bound.

An interesting aspect of the analysis is that we do not use constructions involving vortices, i.e. we do not localize the set where
$\{x\in\Omega~:~|\psi(x)|\leq 1/2\}$ (as this is  certainly difficult in three dimensions).  This is a significant difference between the strategy of our proof and the one given in \cite{SS00} for the two-dimensional functional. However, the construction of `vortex-balls' for the two dimensional functional `implicitly'  appears in the analysis of the three dimensional approximate  problem, as we refer to results of \cite{SS, SS00}. In the context of the Ginzburg-Landau model, the implementation of `semi-classical' techniques to address situations where vortices exist seems rather new.

The analysis presented in Section~\ref{sec:LP} combined with a recently proved estimate in \cite{FK3D} enables us to prove
a theorem of independent interest (Theorem~\ref{thm-f(b)} below), which concerns the asymptotic behavior of a limiting constant appearing in \cite{SS02},  thereby answering a question raised by the authors of the aforementioned paper.

The paper is organized as follows. Section~\ref{sec:LP} is devoted to the analysis of the approximate problem. In Section~\ref{sec:ub}, an upper bound of the ground state energy is obtained. In Section~\ref{sec:ap}, interesting
estimates are obtained for minimizers of \eqref{eq-3D-GLf}. Section~\ref{sec:lb} is devoted to the proof of the lower
bound.

\paragraph{\it Remark on notation:}
\begin{itemize}
\item The letter $C$ denotes a positive constant that is independent of the parameters $\kappa$ and $H$, and whose value  may change from line to line.
\item If $a(\kappa)$ and $b(\kappa)$ are two functions with $b(\kappa)\not=0$, we write $a(\kappa)\sim b(\kappa)$ if $a(\kappa)/b(\kappa)\to1$ as $\kappa\to\infty$.
\end{itemize}

\section{The approximate problem}\label{sec:LP}
\subsection{Two dimensional energy}
Let $K=(-1/2,1/2)\times(-1/2,1/2)$ be a square of unit side length,  $h_{\rm ex}$ and $\varepsilon$ be two positive parameters. Consider the functional defined for all $u\in H^1(K;\C)$,
\begin{equation}\label{eq-2D-f}
E^{\rm 2D}(u)=\int_K\left(|(\nabla-ih_{\rm ex}\Ab_0)u|^2+\frac1{2\varepsilon^2}(1-|u|^2)^2\right)\,dx\,.
\end{equation}
Here $\Ab_0$ is the vector potential,
\begin{equation}\label{eq-Ab0}
\Ab_0(x_1,x_2)=\frac12(-x_2,x_1)\,,\quad (x_1,x_2)\in\R^2\,,\end{equation}
whose $\curl$ is equal to $1$.

Notice that the functional $E^{\rm 2D}$ is a simplified version of the full Ginzburg-Landau functional considered in \cite{SS00}, as the
magnetic potential in \eqref{eq-2D-f} is given and {\it not} an unknown of the problem.

We introduce the ground state energy,
\begin{equation}\label{eq-2D-gse}
m_0(h_{\rm ex},\varepsilon)=\inf\{E^{\rm 2D}(u)~:~u\in H^1(K;\C)\}\,.\end{equation}

Since $E^{\rm 2D}$ is bounded from below, there exists a ground state (minimizer) associated to $m_0(h_{\rm ex},\varepsilon)$. If $u$ is such a ground state, then it results from a standard application of the maximum principle that,
\begin{equation}\label{eq-mprinciple}
|u|\leq1\quad{\rm in }\quad K\,.
\end{equation}

Consider the regime of magnetic fields $h_{\rm ex}$ as in Theorem~\ref{thm-SS00} below.
We can obtain a lower bound of $m_0(h_{\rm ex},\varepsilon)$ (or rather of $E^{\rm 2D}(u)$, with $u$ a ground state) exactly as in \cite[Section~8.2]{SS}, by using a scaling argument that reduces the situation to magnetic fields of lower order (precisely of order $|\ln\varepsilon|$). In this way, we get the following theorem.

\begin{thm}\label{thm-SS00}
Assume that $h_{\rm ex}$ is a function of $\varepsilon$ such that
$$|\ln\varepsilon|\ll h_{\rm ex}\ll \frac1{\varepsilon^2}\,,\quad{\rm as}~\varepsilon\to0\,.$$
Then the ground state energy $m_0(h_{\rm ex},\varepsilon)$ satisfies,
$$m_0(h_{\rm ex},\varepsilon)\geq h_{\rm ex}\ln\frac1{\varepsilon\sqrt{h_{\rm ex}}}\big(1+o(1)\big)\,,
$$
as $\varepsilon\to 0$.
\end{thm}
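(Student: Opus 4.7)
The plan is to derive the lower bound from Sandier--Serfaty's vortex-ball analysis of the two-dimensional Ginzburg--Landau functional, after a scaling argument that brings the problem into the regime $h_{\rm ex}\sim|\ln\varepsilon|$ where that analysis applies most directly.

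First, I would perform the dilation $y=Lx$, $v(y):=u(y/L)$, with the scale factor $L=\sqrt{h_{\rm ex}/|\ln\varepsilon|}$. Under the assumption $|\ln\varepsilon|\ll h_{\rm ex}\ll 1/\varepsilon^2$, we have $L\to\infty$ and $\widetilde\varepsilon:=\varepsilon L\to 0$. Using $\Ab_0(y/L)=L^{-1}\Ab_0(y)$, a direct change of variables yields
$$E^{\rm 2D}(u) = \int_{K_L}\Bigl(|(\nabla - i\widetilde h\,\Ab_0)v|^2 + \frac{1}{2\widetilde\varepsilon^{2}}(1-|v|^2)^2\Bigr)\,dy,$$
with $K_L:=LK$ a square of side $L$ and $\widetilde h:=h_{\rm ex}/L^2=|\ln\varepsilon|$.

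Next, I would observe that this rescaled integral is the non-magnetic part of the full two-dimensional Ginzburg--Landau functional evaluated at $A=\widetilde h\,\Ab_0$. Since $\curl(\widetilde h\,\Ab_0)=\widetilde h$ exactly, the induced-field penalty $|\curl A-\widetilde h|^2$ vanishes for this choice, hence
$$m_0(h_{\rm ex},\varepsilon)\geq \inf_{(v,A)} G_{\widetilde\varepsilon,\widetilde h}(v,A),$$
where $G_{\widetilde\varepsilon,\widetilde h}$ denotes the full 2D GL functional on $K_L$ with coherence length $\widetilde\varepsilon$ and applied field $\widetilde h$. Since $\widetilde h$ is of order $|\ln\widetilde\varepsilon|$, the Sandier--Serfaty lower bound \cite[Sec.~8.2]{SS}, resting on the vortex-ball construction and the Jacobian estimate, delivers
$$\inf_{(v,A)} G_{\widetilde\varepsilon,\widetilde h}(v,A) \geq |K_L|\,\widetilde h\,\ln\frac{1}{\widetilde\varepsilon\sqrt{\widetilde h}}\,(1+o(1)).$$

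Finally, the scaling unwinds cleanly: $|K_L|\widetilde h = L^2\widetilde h = h_{\rm ex}$ and $\widetilde\varepsilon\sqrt{\widetilde h}=\varepsilon L\sqrt{|\ln\varepsilon|}=\varepsilon\sqrt{h_{\rm ex}}$, so the right-hand side becomes exactly $h_{\rm ex}\ln(1/(\varepsilon\sqrt{h_{\rm ex}}))(1+o(1))$, which is the claimed estimate. The principal technical obstacle is the transfer of the estimates from \cite{SS} to the square $K_L$ whose size grows with $\varepsilon$: however these arguments are essentially local in nature (vortex balls are built in regions much smaller than $K_L$ and the resulting lower bounds are summed against a density of order $\widetilde h$), so with routine bookkeeping they apply without new ingredients, which is precisely why one may argue \textit{exactly as in} \cite[Sec.~8.2]{SS}.
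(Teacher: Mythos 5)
Your scaling computation and the final bookkeeping are fine, and the overall route (rescale, then invoke the Sandier--Serfaty machinery of \cite[Section~8.2]{SS}) is the one the paper itself gestures at; but the two middle steps do not hold together, and this is a genuine gap. First, with $L=\sqrt{h_{\rm ex}/|\ln\varepsilon|}$ the rescaled field is $\widetilde h=|\ln\varepsilon|$, and the ratio $\widetilde h/|\ln\widetilde\varepsilon|$ is \emph{not} bounded in general: writing $h_{\rm ex}\approx\varepsilon^{-2\alpha}$ one gets $|\ln\widetilde\varepsilon|\approx(1-\alpha)|\ln\varepsilon|$, so the ratio tends to $1/(1-\alpha)$, and for $h_{\rm ex}=\varepsilon^{-2}/|\ln\varepsilon|$ one has $|\ln\widetilde\varepsilon|\sim\tfrac12\ln|\ln\varepsilon|\ll\widetilde h$. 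In that upper part of the regime the rescaled problem is still an ``intermediate regime'' problem (now on a square whose side diverges), so quoting the Section~8.2 asymptotics there is essentially quoting the statement you are trying to prove. Second, and more seriously, in the rest of the regime, where $\widetilde h\sim\lambda'|\ln\widetilde\varepsilon|$ with $\lambda'$ a finite constant, the inequality you use for the freely minimized functional is false: at applied fields of order $|\ln\widetilde\varepsilon|$ the minimum of the full Ginzburg--Landau functional is governed by an obstacle-type problem, the optimal vortex density is only about $(1-\tfrac1{2\lambda'})\widetilde h/2\pi$, and per unit area the minimum is of order $(1-\tfrac1{4\lambda'})\,\widetilde h\ln\frac1{\widetilde\varepsilon\sqrt{\widetilde h}}$, strictly below the value you assert by a constant factor. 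The relaxation $m_0\ge\inf_{(v,A)}G_{\widetilde\varepsilon,\widetilde h}(v,A)$ discards exactly the rigidity that produces the sharp constant: in the problem at hand the potential is \emph{fixed} and its curl is exactly uniform, so the phase must carry essentially the full flux in vortices, whereas the freely minimized functional can trade vortices for field energy.

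To repair the argument you must not minimize over $A$. Either run the vortex-ball construction and Jacobian estimate directly for the fixed-potential functional on a grid of squares of side $\delta$ with $1/\sqrt{h_{\rm ex}}\ll\delta\ll1$, where the total degree per square is forced to be $\approx h_{\rm ex}\delta^2/2\pi$ up to errors controlled by the energy, each unit of degree costing $2\pi\ln\frac1{\varepsilon\sqrt{h_{\rm ex}}}\big(1+o(1)\big)$; or rescale with $L=\sqrt{h_{\rm ex}/\ln\frac{1}{\varepsilon\sqrt{h_{\rm ex}}}}$, which gives $\widetilde h\sim|\ln\widetilde\varepsilon|$ uniformly over the whole range $|\ln\varepsilon|\ll h_{\rm ex}\ll\varepsilon^{-2}$, and then apply the fixed-field (not freely minimized) lower bound from \cite{SS} on the enlarged square. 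In either version the growing size of the domain must be handled by a per-unit-area (local) statement rather than by the fixed-domain theorems of \cite{SS} as literally stated; this uniformity is the ``routine bookkeeping'' you mention, and it deserves at least a sentence of justification, but the substantive flaw is the lossy relaxation combined with the choice of $L$.
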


Minimization of the functional $E^{\rm 2D}$ over `magnetic periodic' functions appears naturally as well. Let us introduce the
following space,
\begin{multline}\label{space-p}
E_{h_{\rm ex}}=\{u\in H^1_{\rm loc}(\R^2;\C)~:~u(x_1+1,x_2)=e^{ih_{\rm ex}x_2/2}u(x_1,x_2)\,,\\u(x_1,x_2+1)=e^{-ih_{\rm ex}x_1/2}u(x_1,x_2)\}\,,
\end{multline}
together with the ground state energy,
\begin{equation}\label{eq-mp}
m_{\rm p}(h_{\rm ex},\varepsilon)=\inf\{E^{\rm 2D}(u)~:~u\in E_{h_{\rm ex}}\}\,.
\end{equation}

\begin{thm}\label{thm-SS-p}
Assume that $h_{\rm ex}$ is a function of $\varepsilon$ such that
$$|\ln\varepsilon|\ll h_{\rm ex}\ll \frac1{\varepsilon^2}\,,\quad{\rm as}~\varepsilon\to0\,.$$
Then the ground state energy $m_{\rm p}(h_{\rm ex},\varepsilon)$ satisfies,
$$m_{\rm p}(h_{\rm ex},\varepsilon)=h_{\rm ex}\ln\frac1{\varepsilon\sqrt{h_{\rm ex}}}\big(1+o(1)\big)\,,
$$
as $\varepsilon\to 0$.
\end{thm}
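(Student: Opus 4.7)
The lower bound is essentially free. Every $u\in E_{h_{\rm ex}}$ restricts to an element of $H^1(K;\C)$ with the same value of $E^{\rm 2D}(u)$, so $m_{\rm p}(h_{\rm ex},\varepsilon)\geq m_0(h_{\rm ex},\varepsilon)$, and Theorem~\ref{thm-SS00} gives
\[
m_{\rm p}(h_{\rm ex},\varepsilon)\geq h_{\rm ex}\ln\frac{1}{\varepsilon\sqrt{h_{\rm ex}}}(1+o(1)).
\]
The real work is the matching upper bound, obtained by constructing an explicit magnetically periodic test configuration.

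The natural first move is the rescaling $y=\sqrt{h_{\rm ex}}\,x$, $\tilde u(y)=u(y/\sqrt{h_{\rm ex}})$, under which $E^{\rm 2D}(u)$ transforms into
\[
\tilde E(\tilde u)=\int_{\sqrt{h_{\rm ex}}\,K}\Bigl(|(\nabla_y-i\Ab_0(y))\tilde u|^2+\tfrac{1}{2\tilde\varepsilon^2}(1-|\tilde u|^2)^2\Bigr)\,dy,\qquad \tilde\varepsilon:=\varepsilon\sqrt{h_{\rm ex}},
\]
with unit magnetic field on the large square $\sqrt{h_{\rm ex}}\,K$, while the magnetic periodic conditions defining $E_{h_{\rm ex}}$ become the standard magnetic periodicity of $\tilde u$ under the lattice $\sqrt{h_{\rm ex}}\,\Z^2$. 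The regime $|\ln\varepsilon|\ll h_{\rm ex}\ll\varepsilon^{-2}$ becomes $\tilde\varepsilon\to 0$ on a torus whose area $h_{\rm ex}$ tends to infinity, i.e.\ a London/vortex regime on a large flat torus.

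On this rescaled torus, I would construct an Abrikosov-type vortex lattice: place degree-one vortices on a sublattice of density $1/(2\pi)$ per unit area (adjusting $h_{\rm ex}/(2\pi)$ to the nearest integer to respect flux quantization, a correction whose cost is easily seen to be $o(h_{\rm ex}\ln(1/\tilde\varepsilon))$), with the standard Ginzburg-Landau vortex profile of core size $\tilde\varepsilon$ near each one. The vortices are glued together via a theta-function-type holomorphic prefactor, chosen so that the resulting configuration satisfies the magnetic periodic boundary conditions of $E_{h_{\rm ex}}$ exactly. A standard computation (as performed in \cite{SS,SS02}) shows each vortex contributes $2\pi\ln(1/\tilde\varepsilon)+O(1)$ to $\tilde E$; summing over $\sim h_{\rm ex}/(2\pi)$ vortices (with interaction terms of lower order) yields $\tilde E(\tilde u)\leq h_{\rm ex}\ln(1/\tilde\varepsilon)(1+o(1))$, which is exactly the desired upper bound on $m_{\rm p}$ after unscaling.

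The main obstacle is the last gluing step: ensuring that the constructed vortex array genuinely lies in $E_{h_{\rm ex}}$, i.e.\ satisfies the prescribed phase-factor boundary conditions across all lattice translations rather than matching them only up to a smooth error. This is most conveniently handled through the theta-function ansatz familiar from the analysis of the Abrikosov lattice. Once this gauge bookkeeping is done, the energy estimate reduces to a routine lattice sum of individual vortex Dirichlet energies.
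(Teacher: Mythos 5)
Your outline reproduces the paper's strategy: the lower bound is obtained exactly as in the paper (restriction to $K$ gives $m_{\rm p}\geq m_0$, then Theorem~\ref{thm-SS00}), and the upper bound in both cases comes from an explicit magnetically periodic test configuration carrying one degree-one vortex per cell of area $\approx 2\pi/h_{\rm ex}$, each vortex costing $2\pi\ln\frac1{\varepsilon\sqrt{h_{\rm ex}}}+\mathcal O(1)$. The only real divergence is how the periodic gluing is implemented. The paper follows Aydi--Sandier \cite{AyS}: it tiles $K$ by $N^2$ squares of side $1/N$ with $N\leq\sqrt{h_{\rm ex}/2\pi}<N+1$, solves the London-type Neumann problem $-\Delta h+h_{\rm ex}=2\pi\delta_{a_0}$ on one cell, periodizes, builds the phase from $\nabla\phi=-\nabla^{\perp}h+h_{\rm ex}\Ab_0$ together with the modulus cutoff $\rho=\min(1,|x-a_0|/\varepsilon)$, and cites \cite[Lemma~5.11]{Ay} for the fact that $\rho e^{i\phi}$ genuinely belongs to $E_{h_{\rm ex}}$; you instead rescale to a torus of area $h_{\rm ex}$ and propose a theta-function (Abrikosov) prefactor. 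Both routes are viable, and the step you flag as the ``main obstacle'' --- exact membership in $E_{h_{\rm ex}}$ --- is precisely the step the paper delegates to \cite{AyS,Ay}, so you would have to carry out that gauge bookkeeping (or cite the same sources) to have a complete proof. Two cautions if you pursue your variant: (i) a genuine lowest-Landau-level theta function has cores of size $\mathcal O(1)$ on the rescaled torus, so you must use it only for its phase and zero structure and insert the $\tilde\varepsilon$-scale cores by hand, otherwise the per-vortex cost $2\pi\ln(1/\tilde\varepsilon)$ does not appear; (ii) inside $E_{h_{\rm ex}}$ the total degree per period cell is dictated by the quasi-periodicity conditions, so you cannot literally ``adjust the number of vortices''; what you adjust is the geometry of the array (it cannot be exactly square unless $h_{\rm ex}/2\pi$ is a perfect square), the same mismatch the paper absorbs through $N^2$ versus $h_{\rm ex}/2\pi$, and its cost is indeed $o\big(h_{\rm ex}\ln\frac1{\varepsilon\sqrt{h_{\rm ex}}}\big)$.
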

\begin{proof}
Since the restriction of a function in $E_{h_{\rm ex}}$ to $K$ is a function in $H^1(K)$, we get that $m_{\rm p}(h_{\rm ex},\varepsilon)\geq m_0(h_{\rm ex},\varepsilon)$, where $m_0(h_{\rm ex},\varepsilon)$ is the ground state energy in \eqref{eq-2D-gse}. Theorem~\ref{thm-SS00} then gives us a lower bound of $m_{\rm p}(h_{\rm ex},\varepsilon)$.

We prove the upper bound by computing the energy of a test function $u$
constructed in \cite{AyS}.  Let $N$ be the largest positive integer satisfying $N\leq \sqrt{h_{\rm ex}/2\pi}<N+1$. Divide
the square $K$ into $N^2$ disjoint squares $(K_j)_{0\leq j\leq N^2-1}$ each of side length equal to $1/N$ and center $a_j$.

Let $h$ be the unique solution of the problem,
$$\left\{
\begin{array}{lll}
-\Delta h+h_{\rm ex}=2\pi\delta_{a_0}&{\rm in}&K_0\\
\displaystyle\frac{\partial h}{\partial\nu}=0&{\rm on}&\partial K_0\\
\displaystyle\int_{K_0}h\,dx=0.&&
\end{array}\right.
$$
Here $\nu$ is the unit outward normal vector of $K_0$. By
uniqueness  of $h$ as solution of the aforementioned problem,  $h$
is symmetric with respect to the axes of the square $K_0$ and
hence satisfies periodic conditions on the boundary of $K_0$.
Moreover, the function $v(x)=h(x)-\ln|x-a_0|$ is smooth in $K_0$,
since $-\Delta v+h_{\rm ex}=0$. Consequently, through a scaling
argument, it is easy to check that, as $\varepsilon\to 0$,
\begin{align*}
\int_{K_0\setminus B(a_0,\varepsilon)}|\nabla h|^2\,dx&\leq 2\pi \ln\frac1{\varepsilon N}+\mathcal O(1)\\
&\leq 2\pi\ln\frac1{\varepsilon\sqrt{h_{\rm ex}}}+\mathcal O(1)\,.
\end{align*}
We extend $h$ by periodicity in the square $K$. Let $\phi$ be a function (defined modulo $2\pi$) satisfying
in $K\setminus\{a_{j}~:~0\leq j\leq N^2-1\}$,
$$\nabla\phi=-\nabla^\bot h+h_{\rm ex}\Ab_0\,.$$
Here $\nabla^\bot=(-\partial_{x_2},\partial_{x_1})$ and $\Ab_0$ is the magnetic potential in \eqref{eq-Ab0}.

If $x\in K_0$, let $\rho(x)=\min(1,|x-a_0|/\varepsilon)$. We extend the function $\rho$ by periodicity in the square $K$. We put
$u(x)=\rho(x) e^{i\varphi(x)}$ for all $x\in K$. Then $u$ can be extended as  a function in the space $E_{h_{\rm ex}}$ in \eqref{space-p},
see \cite[Lemma~5.11]{Ay} for details.

The energy of $u$ is easily computed, since $u$ is `magnetic periodic'. Actually,
\begin{align*}
E^{\rm 2D}(u)&=N^2\times \int_{K_0}\left(\rho^2|\nabla h|^2+|\nabla\rho|^2+\frac1{2\varepsilon^2}(1-\rho)^2\right)\,dx\\
&\leq N^2\times\left(2\pi\ln\frac1{\varepsilon\sqrt{h_{\rm ex}}}+\mathcal O(1)\right)\,.
\end{align*}
Since, $N=\sqrt{h_{\rm ex}/2\pi}\big(1+o(1)\big)$ as $\varepsilon\to0$, and $m_{\rm p}(h_{\rm ex},\varepsilon)\leq
E^{\rm 2D}(u)$, we deduce that,
$$m_{\rm p}(h_{\rm ex},\varepsilon)\leq
h_{\rm ex}\ln\frac1{\varepsilon\sqrt{h_{\rm ex}}}\big(1+o(1)\big)\,,$$
as $\varepsilon\to0$.
\end{proof}

Since $m_0(h_{\rm ex},\varepsilon)\leq m_{\rm p}(h_{\rm ex},\varepsilon)$, we get as a corollary of Theorems~\ref{thm-SS00} and \ref{thm-SS-p}:

\begin{corol}\label{corol-SS00}
Let $m_0(h_{\rm ex},\varepsilon)$ be the ground state energy introduced in \eqref{eq-2D-gse} above. Suppose that $h_{\rm ex}$ is a function of $\varepsilon$ and $|\ln\varepsilon|\ll h_{\rm ex}\ll1/\varepsilon^2$ as $\varepsilon\to0$. Then,
$$m_{0}(h_{\rm ex},\varepsilon)=
h_{\rm ex}\ln\frac1{\varepsilon\sqrt{h_{\rm ex}}}\big(1+o(1)\big)\,,$$
as $\varepsilon\to0$.
\end{corol}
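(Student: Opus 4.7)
The plan is to observe that the corollary follows from a direct sandwich of the two preceding theorems, using only the elementary comparison between the two variational problems. The key observation is that the ground state energy $m_0(h_{\rm ex},\varepsilon)$ is defined as an infimum over a larger class of test functions than $m_{\rm p}(h_{\rm ex},\varepsilon)$, so that
\[
m_0(h_{\rm ex},\varepsilon)\leq m_{\rm p}(h_{\rm ex},\varepsilon).
\]
Indeed, if $u\in E_{h_{\rm ex}}$, then the restriction $u|_K$ belongs to $H^1(K;\C)$, and the energy integral $E^{\rm 2D}(u)$ is computed on $K$ for both problems (the magnetic periodicity only intervenes through the choice of test class, not through the integrand), so every admissible function for $m_{\rm p}$ gives, after restriction, an admissible function for $m_0$ with the same energy.

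First I would invoke Theorem~\ref{thm-SS00} to record the asymptotic lower bound
\[
m_0(h_{\rm ex},\varepsilon)\geq h_{\rm ex}\ln\frac{1}{\varepsilon\sqrt{h_{\rm ex}}}\bigl(1+o(1)\bigr)
\]
as $\varepsilon\to 0$, valid in the regime $|\ln\varepsilon|\ll h_{\rm ex}\ll 1/\varepsilon^2$. Then I would invoke Theorem~\ref{thm-SS-p}, which yields the matching upper bound
\[
m_{\rm p}(h_{\rm ex},\varepsilon)\leq h_{\rm ex}\ln\frac{1}{\varepsilon\sqrt{h_{\rm ex}}}\bigl(1+o(1)\bigr)
\]
in the same regime, and combine it with the elementary inequality $m_0\leq m_{\rm p}$ above.

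Putting these three ingredients together gives
\[
h_{\rm ex}\ln\frac{1}{\varepsilon\sqrt{h_{\rm ex}}}\bigl(1+o(1)\bigr)\leq m_0(h_{\rm ex},\varepsilon)\leq m_{\rm p}(h_{\rm ex},\varepsilon)\leq h_{\rm ex}\ln\frac{1}{\varepsilon\sqrt{h_{\rm ex}}}\bigl(1+o(1)\bigr),
\]
which yields the claimed equality. There is really no main obstacle here: the work has already been carried out in Theorems~\ref{thm-SS00} and \ref{thm-SS-p} (the genuine content being the periodic test configuration constructed in the proof of Theorem~\ref{thm-SS-p}, based on the function $h$ solving the scalar Poisson--Neumann problem on the small square $K_0$). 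The corollary is purely a packaging statement, obtained by sandwiching $m_0$ between the lower bound for $m_0$ itself and the upper bound for the more constrained quantity $m_{\rm p}$.
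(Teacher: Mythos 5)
Your argument is correct and coincides with the paper's own: the corollary is obtained there in one line by combining the lower bound of Theorem~\ref{thm-SS00} with the upper bound of Theorem~\ref{thm-SS-p} via the elementary inequality $m_0(h_{\rm ex},\varepsilon)\leq m_{\rm p}(h_{\rm ex},\varepsilon)$, justified exactly as you do by restricting magnetic-periodic functions to $K$. Nothing further is needed.
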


Theorem~\ref{thm-SS-p} serves in answering a question of independent interest arising in \cite{SS02}. Consider two constants
$b\in(0,1)$ and $R>0$. Let $K_R=(-R/2,R/2)\times(-R/2,R/2)$. If $u\in H^1(K_R)$, we define the energy,
$$ F_{K_R}(u)=\int_{K_R}\left(b|\nabla-i\Ab_0)u|^2+\frac12(1-|u|^2)^2\right)\,dx\,,$$
together with the ground state energy,
$$e_{\rm p}(b,R)=\inf\{F_{K_R}(u)~:~u\in E_R\}\,.$$
Here $\Ab_0$ is the magnetic potential introduced in \eqref{eq-Ab0} and $E_R$ is the space introduced in \eqref{space-p}, (with $h_{\rm ex}=R$). It is proved that, for all $b\in(0,1)$, there exists a constant $f(b)$ such that,
\begin{equation}\label{eq-f(b)}
f(b)=\frac12\lim_{R\to\infty}\frac{e_{\rm p}(b,R)}{R^2}\,.\end{equation}
The limiting constant $f(b)$ appeared in \cite{SS02, AS}, then it is recently studied with different tools in \cite{FK3D}. This limiting constant describes the ground state energy of both two and three dimensional superconductors subject to high
magnetic fields (see \cite{FK3D}).

The behavior of the function $f(b)$ as $b\to1_-$ is analyzed in details in \cite{FK3D}. However, the behavior as $b\to0_+$ remains open. Only a non-optimal estimate on $f(b)$ is given as $b\to0_+$ in \cite{SS02}. Here, using Theorem~\ref{thm-SS-p}
and an estimate in \cite{FK3D}, we describe the leading order asymptotic behavior of $f(b)$ as $b\to0_+$.

\begin{thm}\label{thm-f(b)}
Let $f(b)$ be as defined in \eqref{eq-f(b)}. Then, as $b\to0_+$, $f(b)$ satisfies,
$$f(b)=\frac{b}2\ln\frac1{\sqrt{b}}\,\big(1+o(1)\big)\,.$$
\end{thm}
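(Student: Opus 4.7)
The plan is to rescale the functional $F_{K_R}$ onto the unit square $K$ and then exploit the asymptotic analysis of $m_{\rm p}$ provided by Theorem~\ref{thm-SS-p}. Given $u\colon K_R\to\C$, set $v(y) = u(Ry)$ for $y\in K$. Since $\Ab_0$ is linear, $\Ab_0(Ry)=R\,\Ab_0(y)$, and therefore $(\nabla_x - i\Ab_0(x))u(x) = R^{-1}(\nabla_y - iR^2\Ab_0(y))v(y)$. A change of variables then gives
\[
F_{K_R}(u) = b\int_K |(\nabla - iR^2\Ab_0)v|^2\,dy + \frac{R^2}{2}\int_K(1-|v|^2)^2\,dy,
\]
which is exactly $b$ times the functional $E^{\rm 2D}$ evaluated with parameters $h_{\rm ex} = R^2$ and $\varepsilon = \sqrt{b}/R$. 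The magnetic periodicity on $K_R$ translates into the periodicity in \eqref{space-p} with $h_{\rm ex}=R^2$, so after taking infima,
\[
\frac{e_{\rm p}(b,R)}{R^2} = \frac{b\,m_{\rm p}(R^2, \sqrt{b}/R)}{R^2}.
\]

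For the upper bound I would use the explicit magnetic-periodic test configuration constructed in the proof of Theorem~\ref{thm-SS-p}: partitioning $K$ into $N^2 \sim R^2/(2\pi)$ subsquares, each carrying a single vortex of core size $\varepsilon=\sqrt{b}/R$, yields $m_{\rm p}(R^2, \sqrt{b}/R) \le R^2\ln(1/\sqrt{b}) + O(R^2)$ as $R\to\infty$ for fixed $b$. Hence $e_{\rm p}(b,R)/R^2 \le b\ln(1/\sqrt{b}) + O(b)$, and letting $R\to\infty$ gives $2f(b)\le b\ln(1/\sqrt{b})+O(b)$. Since $b\ln(1/\sqrt{b}) \gg b$ as $b\to 0_+$, the remainder is absorbed into a multiplicative $(1+o(1))$ factor, yielding $f(b)\le\tfrac{b}{2}\ln(1/\sqrt{b})(1+o(1))$.

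The harder direction is the matching lower bound. The obstruction is that Theorems~\ref{thm-SS00} and \ref{thm-SS-p} require the regime $h_{\rm ex}\varepsilon^2\to 0$; in the rescaled variables above this corresponds to $b\to 0$, whereas $f(b)$ is defined through $R\to\infty$ with $b$ held fixed, so the lower bound in Theorem~\ref{thm-SS-p} does not apply directly. To circumvent this order-of-limits difficulty I would invoke the estimate from \cite{FK3D} mentioned in the discussion preceding the theorem: it supplies a lower bound of the form $e_{\rm p}(b,R)/R^2 \ge b\ln(1/\sqrt{b})(1+o_b(1))$ which is uniform in $R$. Letting $R\to\infty$ and combining with the upper bound then yields $f(b) = \tfrac{b}{2}\ln(1/\sqrt{b})(1+o(1))$ as $b\to 0_+$. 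The main obstacle is precisely this interchange of limits, which is why the proof cannot rely on Theorem~\ref{thm-SS-p} alone but needs the complementary estimate in \cite{FK3D}.
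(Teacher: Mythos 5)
Your scaling identity $e_{\rm p}(b,R)=b\,m_{\rm p}(R^2,\sqrt{b}/R)$ and your diagnosis of the order-of-limits obstruction are both correct, and the upper bound you sketch is essentially sound (though note that with $b$ fixed and $R\to\infty$ one has $\varepsilon\sqrt{h_{\rm ex}}=\sqrt{b}$ fixed, so Theorem~\ref{thm-SS-p} does not apply as stated and you must indeed redo the vortex-lattice construction by hand, checking that the $\mathcal O(1)$ per-cell error is controlled for small fixed $b$). The genuine gap is in the lower bound: you invoke ``the estimate from \cite{FK3D}'' as if it were a lower bound $e_{\rm p}(b,R)/R^2\geq b\ln(1/\sqrt{b})\,(1+o_b(1))$ uniform in $R$. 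No such bound is available there, and if it were, it would already contain the entire difficulty of the theorem (combined with your upper bound it \emph{is} the theorem); you are in effect assuming the conclusion at the key step. What \cite{FK3D} actually provides, and what the paper uses, is the quantitative two-sided comparison
\begin{equation*}
\forall\,b\in(0,1),\ \forall\,R\geq R_0:\qquad \Bigl|2f(b)-\frac{e_{\rm p}(b,R)}{R^2}\Bigr|\leq \frac{C}{R},
\end{equation*}
with $C,R_0$ independent of $b$.

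This estimate removes the interchange-of-limits problem in a different way than you propose: since it holds for every admissible pair $(b,R)$, one may evaluate $f(b)$ along a diagonal by choosing $R=R(b)$, e.g.\ $R=1/b$. With this choice the rescaled parameters $h_{\rm ex}=R^2$, $\varepsilon=\sqrt{b}/R$ satisfy $\varepsilon\to0$ and $|\ln\varepsilon|\ll h_{\rm ex}\ll\varepsilon^{-2}$ as $b\to0_+$, so Theorem~\ref{thm-SS-p} applies and gives, via \eqref{eq-pp-f(b)} and $\varepsilon\sqrt{h_{\rm ex}}=\sqrt{b}$, both the upper \emph{and} lower bounds $e_{\rm p}(b,R)/R^2=b\ln(1/\sqrt{b})\,(1+o(1))$; since $C/R=Cb=o\bigl(b\ln(1/\sqrt{b})\bigr)$, the comparison above yields $2f(b)=b\ln(1/\sqrt{b})\,(1+o(1))$. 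So the correct use of the \cite{FK3D} input also makes your separate fixed-$b$ upper-bound construction unnecessary; as written, your argument leaves the matching lower bound unproved.
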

\begin{rem}
In \cite{SS02}, it is proved that
$$b-\frac{b^2}2\leq f(b)\leq \frac{b}2\ln\frac1{\sqrt{b}}\,\big(1+o(1)\big)\,,$$
 as $b\to0_+$.
\end{rem}

\begin{proof}[Proof of Theorem~\ref{thm-f(b)}]
It is proved in \cite[Theorem~2.1 \& Proposition~2.8]{FK3D} that there exist universal constants $C$ and $R_0$ such that,
\begin{equation}\label{eq-p-f(b)}
\forall ~b\in(0,1)\,,\quad\forall~R\geq R_0\,,\quad \left|2f(b)-\frac{e_{\rm p}(b,R)}{R^2}\right|\leq \frac{C}R\,.\end{equation}
Let $h_{\rm ex}=R^2$ and $\varepsilon=\sqrt{b}/R$. A scaling argument shows  that,
\begin{equation}\label{eq-pp-f(b)}
e_{\rm p}(b,R)=b\,m_{\rm p}(h_{\rm ex},\varepsilon)\,.\end{equation}
We select $R=1/b$ so that as $b\to0_+$ we have $\varepsilon\to0$ and $|\ln\varepsilon|\ll h_{\rm ex}\ll\varepsilon^{-2}$. Theorem~\ref{thm-SS-p} then tells us that,
$$m_{\rm p}(h_{\rm ex},\varepsilon)=h_{\rm ex}\ln\frac1{\varepsilon\sqrt{h_{\rm ex}}}\,\big(1+o(1)\big)\,.$$
We insert this estimate into \eqref{eq-pp-f(b)} then we substitute the values  $h_{\rm ex}=R^2$ and $\varepsilon\sqrt{h_{\rm ex}}=\sqrt{b}$. Finally, inserting the resulting estimate into \eqref{eq-p-f(b)} finishes  the proof of the proposition.
\end{proof}


\subsection{Three dimensional energy}
If $\mathcal D$ is an open set of $\R^3$ and $u\in H^1(\mathcal D;\C)$, we define the energy
\begin{equation}\label{eq-3D-en}
G_{\mathcal D}(u)=\int_{Q_R}\left(b|(\nabla-i\Fb)u|^2+\frac12(1-|u|^2)^2\right)\,dx\,.
\end{equation}
Here $\Fb$ is the magnetic potential,
\begin{equation}\label{eq-Fb}
\Fb(x_1,x_2,x_3)=(-x_2/2,x_1/2,0)\,,\quad (x_1,x_2,x_3)\in\R^3\,,
\end{equation}
whose $\curl$ is equal to $1$.

Let $b$ and $R$ be two positive parameters. Consider a cube $Q_R$ of side length $R$ defined as follows,
\begin{equation}\label{eq-QR}
Q_R=(-R/2,R/2)\times(-R/2,R/2)\times(-R/2,R/2)\,.
\end{equation}
We introduce the ground state energy,
\begin{equation}\label{eq-3D-gse}
M_0(b,R)=\inf\{G_{Q_R}(u)~:~u\in H^1(Q_R;\C)\}\,.
\end{equation}
In the next theorem, we give an asymptotic lower bound of the ground state energy $M_0(b,R)$ as $b\to0$ and $R\to\infty$ simultaneously, in such a way that $\ln(Rb^{-1/2})\ll R^2$.

\begin{thm}\label{thm-3D-gse}
Suppose that the positive parameters $b=b(\epsilon)$ and $R=R(\epsilon)$ are functions of a parameter $\epsilon$ such that,
$$b(\epsilon)\to 0\,,\quad R(\epsilon)\to\infty\,,\quad{\rm and}\quad
\frac1{R(\epsilon)^2}\ln\frac{R(\epsilon)}{\sqrt{b(\epsilon)}}\to0\,,$$
as $\epsilon\to 0$.

Then, the ground state energy $M_0(b,R)$ satisfies,
$$\frac{M_0(b,R)}{R^3}= b\ln\frac1{\sqrt{b}}\big(1+o(1)\big)\,,
$$
as $\epsilon\to0$.
\end{thm}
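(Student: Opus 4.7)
The crucial observation is that the vector potential $\Fb$ depends only on $(x_1,x_2)$ and has vanishing third component, so the three-dimensional minimization on $Q_R$ essentially decouples into a one-parameter family of two-dimensional problems on the slices $K_R\times\{x_3\}$, where $K_R=(-R/2,R/2)^2$. After a rescaling these two-dimensional problems fit the hypotheses of Corollary~\ref{corol-SS00}, and the theorem will follow by a Fubini argument. In particular, no three-dimensional vortex structure needs to be constructed.

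For the lower bound I will drop the non-negative term $b|\partial_{x_3}u|^2$ and apply Fubini:
\begin{equation*}
G_{Q_R}(u) \;\geq\; \int_{-R/2}^{R/2}\mathcal I(x_3)\,dx_3,\qquad \mathcal I(x_3)=\int_{K_R}\Bigl(b|(\nabla_{x_1,x_2}-i\Ab_0)u|^2 + \tfrac12(1-|u|^2)^2\Bigr)dx_1\,dx_2.
\end{equation*}
For each fixed $x_3$, $\mathcal I(x_3)$ is bounded below by its infimum over $H^1(K_R;\C)$. A rescaling $y=x/R$ identifies this infimum, up to the overall factor $b$, with $m_0(h_{\rm ex},\varepsilon)$ at parameters
\begin{equation*}
h_{\rm ex}=R^2,\qquad \varepsilon=\frac{\sqrt{b}}{R}.
\end{equation*}
The hypotheses $b\to 0$ and $R^{-2}\ln(R/\sqrt{b})\to 0$ translate exactly into the two-sided regime $|\ln\varepsilon|\ll h_{\rm ex}\ll \varepsilon^{-2}$ required by Corollary~\ref{corol-SS00}, which then gives $m_0(R^2,\sqrt{b}/R)=R^2\ln(1/\sqrt{b})(1+o(1))$. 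Reinstating the factor $b$ and integrating over $x_3\in(-R/2,R/2)$ yields the lower bound $M_0(b,R)\geq bR^3\ln(1/\sqrt{b})(1+o(1))$.

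For the upper bound I will choose a near-minimizer $v_\varepsilon\in H^1(K;\C)$ of the functional $E^{\rm 2D}$ at the same parameters $h_{\rm ex}=R^2$, $\varepsilon=\sqrt{b}/R$ (for example the restriction to $K$ of the explicit magnetic-periodic test configuration built in the proof of Theorem~\ref{thm-SS-p}), and set $u(x)=v_\varepsilon(x_1/R,x_2/R)$ on $Q_R$. Because $u$ is independent of $x_3$ the term $b|\partial_{x_3}u|^2$ vanishes, and the computation is precisely the inverse of the Fubini--rescaling identity used in the lower bound, so $G_{Q_R}(u)\leq bR^3\ln(1/\sqrt{b})(1+o(1))$. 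Since $M_0(b,R)\leq G_{Q_R}(u)$, the two bounds match. The only delicate point is the translation of asymptotic regimes between the three- and two-dimensional problems; the hypotheses of the theorem are designed precisely so that this translation is automatic, and beyond this bookkeeping the result is essentially a Fubini--scaling corollary of Corollary~\ref{corol-SS00}.
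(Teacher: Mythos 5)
Your proof is correct and follows essentially the same route as the paper: drop the $b|\partial_{x_3}u|^2$ term, reduce by Fubini and the scaling $h_{\rm ex}=R^2$, $\varepsilon=\sqrt{b}/R$ to the two-dimensional energy $m_0(h_{\rm ex},\varepsilon)$ of Corollary~\ref{corol-SS00}, and match with an $x_3$-independent test function built from a (near-)minimizer of $E^{\rm 2D}$. The only cosmetic difference is that the paper rescales to the unit cube before slicing, while you slice first and rescale each slice.
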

\begin{proof}
Let $h_{\rm ex}=R^2$ and $\varepsilon=\sqrt{b}/R$. By the assumption on $b$ and $R$, it is easy to see that
$\varepsilon\to 0$ and $|\ln\varepsilon|\ll h_{\rm ex}\ll1/{\varepsilon^2}$.

Consequently, Corollary~\ref{thm-SS00} tells us that the ground state energy $m_0(h_{\rm ex},\varepsilon)$ in \eqref{eq-2D-gse} satisfies,
$$m_0(h_{\rm ex},\varepsilon)= h_{\rm ex}\ln\frac1{\varepsilon\sqrt{h_{\rm ex}}}\big(1+o(1)\big)\,.$$
 We will prove that,
$$M_0(b,R)=bR \,m_0(h_{\rm ex},\varepsilon)\,,$$
which will immediately give us the asymptotic estimate in Theorem~\ref{thm-3D-gse}.

Let $u\in H^1(Q_R;\C)$, $K=(-1/2,1/2)\times(-1/2,1/2)$ and $Q_1=K\times (-1/2,1/2)$. Define the rescaled function
$\widetilde u\in H^1(Q_1;\C)$ as follows,
$$\forall~x\in Q_1\,,\quad \widetilde u(x)=u(Rx)\,.$$
It is easy to check that,
\begin{align*}
G_{Q_R}(u)&=bR\int_{-1/2}^{1/2} \left(\int_K\big(|(\nabla-ih_{\rm ex})\Fb)\widetilde u|^2+\frac1{2\varepsilon^2}(1-|\widetilde u|^2)^2\big)\,dx_\bot\right)dx_3\\
&\geq bR\int_{-1/2}^{1/2}
\left(\int_K\big(|(\nabla_{x_\bot}-ih_{\rm ex})\Fb)\widetilde
u|^2+\frac1{2\varepsilon^2}(1-|\widetilde
u|^2)^2\big)\,dx_\bot\right)dx_3\,.
\end{align*}
Here, if $x=(x_1,x_2,x_3)\in\R^3$, we write $x_\bot=(x_1,x_2)$ and
$\nabla_{x_\bot}=(\partial_{x_1},\partial_{x_2})$. Then, recalling the definition of $m_0(h_{\rm ex},\varepsilon)$, we get,
$$G_{Q_R}(u)\geq bR\int_{-1/2}^{1/2} m_0(h_{\rm ex},\varepsilon)\,dx_3=bR\,m_0(h_{\rm ex},\varepsilon)\,.$$
Taking the infimum over all functions $u\in H^1(Q_R;\C)$, we get that
$M_0(b,R)\geq bR\,m_0(h_{\rm ex},\varepsilon)$.

Let $u_{h_{\rm ex},\varepsilon}$ be a ground state of $E^{\rm 2D}$, i.e. $E^{\rm 2D}(u_{h_{\rm ex},\varepsilon})=m_0(h_{\rm ex},\varepsilon)$. Define the function,
$$u:Q_R\ni x\mapsto u_{h_{\rm ex},\varepsilon}(x_\bot/R)\,.$$
Then, $G_{Q_R}(u)=bR\, E^{\rm 2D}(u_{h_{\rm ex},\varepsilon})$,  thereby showing that
$M_0(b,R)\leq bR\,m_0(h_{\rm ex},\varepsilon)$.
\end{proof}

\section{Upper bound of the energy}\label{sec:ub}

The aim of this section is to give an upper bound on the ground state energy $C_0(\kappa,H)$ in \eqref{eq-3D-gs}.

\begin{thm}\label{thm-ub}
Assume that the magnetic field $H$ satisfies $\ln\kappa/\kappa\ll H\ll \kappa$ as $\kappa\to\infty$. Then the ground state energy
$C_0(\kappa,H)$ in \eqref{eq-3D-gs} satisfies,
\begin{equation}\label{eq-ub}
C_0(\kappa,H)\leq |\Omega|\kappa H\ln\sqrt{\frac{\kappa}{H}}\,\big(1+o(1)\big)\,,\end{equation}
as $\kappa\to\infty$.

Furthermore, there exists a constant $\kappa_0$ such that,  if $\kappa\geq\kappa_0$ and
 $(\psi,\Ab)$ is a minimizer of the functional in $\eqref{eq-3D-GLf}$, then
\begin{equation}\label{eq-Ab=Fb}
\|\curl(\Ab-\Fb)\|_{L^2(\R^3)}\leq \frac{2|\Omega|}{\sqrt{\kappa H}}\sqrt{\ln\sqrt{\frac{\kappa}{H}}}\,.
\end{equation}
\end{thm}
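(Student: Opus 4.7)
The strategy is to prove the upper bound \eqref{eq-ub} by a test-configuration argument, after which \eqref{eq-Ab=Fb} will follow by a one-line comparison. For the test pair, I would take $\Ab_0=\Fb$, which kills the magnetic self-energy term in \eqref{eq-3D-GLf} since $\curl\Fb=\beta$. The test wave function is built by ``layering'' a two-dimensional magnetic-periodic minimizer in the $x_3$-direction: fix a length scale $\ell=\ell(\kappa,H)\to 0$, set
\[
h_{\rm ex}=\kappa H\ell^2\quad\text{and}\quad \varepsilon=\frac{1}{\kappa\ell},
\]
let $u_p\in E_{h_{\rm ex}}$ be a magnetic-periodic minimizer of $E^{\rm 2D}$ supplied by Theorem~\ref{thm-SS-p}, and define $\psi_0(x_1,x_2,x_3)=u_p(x_1/\ell,x_2/\ell)$ for $x\in\Omega$. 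The crucial advantage of using $u_p$, rather than assembling cube-by-cube minimizers of $M_0(b,R)$, is that $u_p$ is globally defined on $\R^2$, so $\psi_0$ automatically lies in $H^1(\Omega;\C)$ without any cube-to-cube gauge-matching.

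The energy computation proceeds via the change of variables $y=x_\perp/\ell$: a direct calculation yields the pointwise identity
\[
|(\nabla-i\kappa H\Fb)\psi_0|^2+\frac{\kappa^2}{2}(1-|\psi_0|^2)^2=\frac{1}{\ell^2}\bigg[|(\nabla_y-ih_{\rm ex}\Ab_0(y))u_p(y)|^2+\frac{1}{2\varepsilon^2}(1-|u_p(y)|^2)^2\bigg]_{y=x_\perp/\ell}.
\]
The magnetic-periodicity of $u_p$ makes the two-dimensional density inside the brackets truly periodic with unit cell $K$, so integrating over $\Omega$ (with $dx=\ell^2\,dy\,dx_3$) and applying an averaging argument to the rescaled slices $\Omega(x_3)/\ell$, each containing $\sim|\Omega(x_3)|/\ell^2\to\infty$ unit cells, gives
\[
\mathcal E^{\rm 3D}(\psi_0,\Fb)=\frac{|\Omega|}{\ell^2}\,m_{\rm p}(h_{\rm ex},\varepsilon)+O\!\Big(\frac{|\partial\Omega|}{\ell}\,m_{\rm p}(h_{\rm ex},\varepsilon)\Big).
\]
Since $\varepsilon\sqrt{h_{\rm ex}}=\sqrt{H/\kappa}$, Theorem~\ref{thm-SS-p} yields $m_{\rm p}(h_{\rm ex},\varepsilon)=\kappa H\ell^2\ln\sqrt{\kappa/H}(1+o(1))$ provided $|\ln\varepsilon|\ll h_{\rm ex}\ll\varepsilon^{-2}$; the right-hand inequality reduces to $H\ll\kappa$ and the left-hand one reads $\ln(\kappa\ell)\ll\kappa H\ell^2$, both of which can be arranged by picking $\ell\to 0$ slowly enough, using the hypothesis $H\gg\ln\kappa/\kappa$. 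Substitution produces the main term $|\Omega|\kappa H\ln\sqrt{\kappa/H}(1+o(1))$, while the boundary error is $O(\ell)$ times the main term and hence negligible. I expect the verification of an admissible range for $\ell$ together with the ergodic/boundary-strip estimate to be the main technical point.

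Granted \eqref{eq-ub}, the estimate \eqref{eq-Ab=Fb} follows immediately. Dropping the nonnegative $\Omega$-integral in \eqref{eq-3D-GLf} gives $\kappa^2H^2\|\curl\Ab-\beta\|_{L^2(\R^3)}^2\leq\mathcal E^{\rm 3D}(\psi,\Ab)$; if $(\psi,\Ab)$ is a minimizer this equals $C_0(\kappa,H)$, so combining with \eqref{eq-ub} and $\beta=\curl\Fb$ yields
\[
\|\curl(\Ab-\Fb)\|_{L^2(\R^3)}^2\leq\frac{|\Omega|\ln\sqrt{\kappa/H}}{\kappa H}\,(1+o(1)),
\]
and taking square roots, then choosing $\kappa_0$ large enough to absorb the $(1+o(1))$ factor into the stated multiplicative constant, yields \eqref{eq-Ab=Fb}.
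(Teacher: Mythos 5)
Your proposal is correct and follows essentially the same route as the paper: the paper likewise takes the test pair $(\psi_0,\Fb)$ with $\psi_0$ a rescaled magnetic-periodic minimizer from Theorem~\ref{thm-SS-p} (your parameters $h_{\rm ex}=\kappa H\ell^2$, $\varepsilon=1/(\kappa\ell)$ are the paper's $h_{\rm ex}=1/\ell^2$, $\varepsilon=\sqrt{b}\,\ell$ after renaming, with the paper fixing $\ell$ explicitly where you only require $\kappa H\ell^2\gg\ln\kappa$, $\ell\to0$), uses periodicity of the energy density plus a cell count with negligible boundary cells, and deduces \eqref{eq-Ab=Fb} by dropping the nonnegative $\Omega$-integral exactly as you do.
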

\begin{proof}
Notice that if $(\psi,\Ab)$ is a minimizer of \eqref{eq-3D-GLf}, then $\mathcal E^{\rm 3D}(\psi,\Ab)=C_0(\kappa, H)$.
Consequently, the estimate in \eqref{eq-Ab=Fb} follows immediately from the upper bound in \eqref{eq-ub}.

Let $b=H/\kappa$ and $\ell=\left(\displaystyle\frac{\kappa H}{\ln\kappa}\right)^{1/4}\displaystyle\frac1{\sqrt{\kappa H}}$. Then, as $\kappa\to\infty$, we have,
$$b\ll1\,,\quad \ell\ll1\,,\quad \ell\sqrt{\kappa H}\gg1\,.$$
Let
$h_{\rm ex}=1/\ell^2$ and $\varepsilon=\sqrt{b}\,\ell$. Then, as $\kappa\to\infty$, we have $\varepsilon\ll1$ and
$|\ln\varepsilon|\ll h_{\rm ex}\ll1/\varepsilon^2$.

Recall the ground state energy $m_{\rm p}(h_{\rm ex},\varepsilon)$ and the space $E_{h_{\rm ex}}$
 introduced in  \eqref{eq-mp} and \eqref{space-p} respectively. Let $u\in E_{h_{\rm ex}}$ be a ground state corresponding to
$m_{\rm p}(h_{\rm ex},\varepsilon)$, i.e.
$$\int_{K}\left(|(\nabla-ih_{\rm ex}\Ab_0)u|^2+\frac1{2\varepsilon^2}(1-|u|^2)^2\right)\,dx=m_{\rm p}(h_{\rm ex},\varepsilon)\,.$$
For all $x=(x_\bot,x_3)\in\R^3$, we introduce the function,
$$v(x)=u\big(\ell\sqrt{\kappa H}\,x_\bot\big)\,.$$  Let $(Q_j)$ be a lattice of $\R^3$ generated by the cube,
$$Q=\bigg(-\frac1{2\ell\sqrt{\kappa H}},\frac1{2\ell\sqrt{\kappa H}}\bigg)\times \bigg(-\frac1{2\ell\sqrt{\kappa H}},\frac1{2\ell\sqrt{\kappa H}}\bigg)
\times \bigg(-\frac1{2\ell\sqrt{\kappa H}},\frac1{2\ell\sqrt{\kappa H}}\bigg)\,.$$
It is easy to check that,
$$
\int_{Q}\left(|(\nabla-i\kappa H\Fb)v|^2+\frac{\kappa^2}{2}(1-|v|^2)^2\right)\,dx=\frac1{\ell\sqrt{\kappa H}}\,
m_{\rm p}(h_{\rm ex},\varepsilon)\,.
$$
Here $\Fb$ is the magnetic potential in \eqref{eq-Fb}. Let $\mathcal J=\{Q_j~:~Q_j\cap\partial\Omega\not=\emptyset\}$
and $N={\rm Card}\,\mathcal J$. Then, as $\kappa\to\infty$, we have,
$$N=|\Omega|\times \big(\ell\sqrt{\kappa H}\big)^3\,\big(1+o(1)\big)\,.$$
Recall the functional $\mathcal E^{3D}$ in \eqref{eq-3D-GLf}. We compute the energy of the test configuration $(v,\Fb)$.
Since $\curl \Fb=\beta$ and the function $v$  is magnetic periodic with respect to the lattice $Q_j$, we get,
\begin{align*}
\mathcal E^{3D}(v,\Fb)&=N\times
\int_{Q}\left(|(\nabla-i\kappa H\Fb)v|^2+\frac{\kappa^2}{2}(1-|v|^2)^2\right)\,dx\\
&=N\times \frac1{\ell\sqrt{\kappa H}}m_{\rm p}(h_{\rm ex},\varepsilon)\,.
\end{align*}
We use Theorem~\ref{thm-SS-p}, the definitions of $h_{\rm ex}$ and $\varepsilon$, and the asymptotic behavior of $N$ to get,
$$N\times \frac1{\ell\sqrt{\kappa H}}m_{\rm p}(h_{\rm ex},\varepsilon)
=\kappa H\ln\sqrt{\frac{\kappa}{H}}\,\big(1+o(1)\big)\,,$$
as $\kappa\to\infty$. This proves the upper bound of Theorem~\ref{thm-ub}.
\end{proof}

\section{A priori estimates of minimizers}\label{sec:ap}

The aim of this section is to give {\it a priori} estimates on the solutions of the Ginzburg-Landau equations \eqref{eq-3D-GLeq}. Those estimates play an essential role in controlling the error resulting from various approximations.

The starting point is the following  $L^\infty$-bound  resulting from the maximum principle. Actually,
if $(\psi,\Ab)\in H^1(\Omega;\C)\times \dot H^1_{\Div,\Fb}(\R^3)$ is a solution of \eqref{eq-3D-GLeq}, then
\begin{equation}\label{eq-psi<1}
\|\psi\|_{L^\infty(\Omega)}\leq1\,.
\end{equation}

Next we prove an estimate on the induced magnetic potential.

\begin{prop}\label{prop-ap}
Suppose that the magnetic field $H$ is a function of $\kappa$ such that
$\ln\kappa\ll \kappa H\ll\kappa^2$ as $\kappa\to\infty$. There exist positive constants $\kappa_0$ and $C$
such that, if $\kappa\geq\kappa_0$ and $(\psi,\Ab)\in H^1(\Omega;\C)\times \dot H^1_{\Div,\Fb}(\R^3)$ is a minimizer of the energy in \eqref{eq-3D-GLf}, then,
\begin{align*}
&\|\Ab-\Fb\|_{H^2(\Omega)}\leq \frac{C}{\sqrt{\kappa H}}\sqrt{\ln\sqrt{\frac{\kappa}{H}}}\,,\\
&\|\Ab-\Fb\|_{C^{0,1/2}(\Omega)}\leq\frac{C}{\sqrt{\kappa H}}\sqrt{\ln\sqrt{\frac{\kappa}{H}}} \,.\end{align*}
Here $\Fb$ is the magnetic potential introduced in \eqref{eq-Fb}.
\end{prop}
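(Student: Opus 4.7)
The first step is to turn the second Ginzburg--Landau equation in \eqref{eq-3D-GLeq} into a vector Poisson equation for the deviation $\Ab-\Fb$. Since $\curl\Fb=\beta$ is constant we have $\curl^2\Fb=0$, while the Coulomb gauges $\Div\Ab=\Div\Fb=0$ allow us to use the identity $\curl^2=-\Delta+\nabla\Div$. Subtracting yields
\begin{equation*}
-\Delta(\Ab-\Fb)=-\frac{1}{\kappa H}\,\IM\bigl(\overline{\psi}\,(\nabla-i\kappa H\Ab)\psi\bigr)\mathbf 1_\Omega \quad\text{in }\R^3.
\end{equation*}
The problem is thus reduced to controlling the $L^2(\R^3)$ norm of the right-hand side and then invoking elliptic regularity for the Laplacian on the whole space.

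For the right-hand side I combine the pointwise bound $|\psi|\le 1$ from \eqref{eq-psi<1} with the energy upper bound provided by Theorem~\ref{thm-ub}. Since $(\psi,\Ab)$ is a minimizer, $\mathcal E^{\rm 3D}(\psi,\Ab)=C_0(\kappa,H)\le |\Omega|\kappa H\ln\sqrt{\kappa/H}\bigl(1+o(1)\bigr)$, and dropping the nonnegative potential and magnetic-field terms gives $\|(\nabla-i\kappa H\Ab)\psi\|_{L^2(\Omega)}\le C\sqrt{\kappa H\ln\sqrt{\kappa/H}}$. Multiplying by $|\psi|\le 1$ and dividing by $\kappa H$, the source term in the Poisson equation has $L^2(\R^3)$ norm bounded by $C(\kappa H)^{-1/2}\sqrt{\ln\sqrt{\kappa/H}}$.

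The $H^2(\Omega)$ bound is then assembled from three ingredients. For the second derivatives, the Calder\'on--Zygmund estimate (equivalently the $L^2$-boundedness of the second-order Riesz transforms on $\R^3$) yields $\|D^2(\Ab-\Fb)\|_{L^2(\R^3)}\le \|\Delta(\Ab-\Fb)\|_{L^2(\R^3)}$, controlled by the previous step. For the first derivatives, the identity $\int_{\R^3}|\nabla u|^2\,dx =\int_{\R^3}\bigl(|\curl u|^2+|\Div u|^2\bigr)\,dx$, valid on $\dot H^1(\R^3)$, combined with $\Div(\Ab-\Fb)=0$ and the curl estimate \eqref{eq-Ab=Fb}, gives the required $L^2(\R^3)$ gradient bound. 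For the zeroth-order piece, the Sobolev embedding $\dot H^1(\R^3)\hookrightarrow L^6(\R^3)$ followed by H\"older on the bounded set $\Omega$ provides $\|\Ab-\Fb\|_{L^2(\Omega)}\le C\|\nabla(\Ab-\Fb)\|_{L^2(\R^3)}$. Summing the three contributions produces the announced $H^2(\Omega)$ bound; the $C^{0,1/2}(\overline\Omega)$ estimate then follows from the Sobolev chain $H^2(\Omega)\hookrightarrow W^{1,6}(\Omega)\hookrightarrow C^{0,1/2}(\overline{\Omega})$, valid for smooth bounded domains in $\R^3$ (Morrey exponent $1-3/6=1/2$).

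The only subtlety to be aware of is that the source $j\,\mathbf 1_\Omega$ has a jump across $\partial\Omega$, so bootstrapping past $H^2$ is not available; happily, the stated estimates live exactly at the regularity threshold compatible with an $L^2$ source, and the Morrey exponent meshes perfectly with the dimension. The main quantitative input is the sharp size of the energy provided by Theorem~\ref{thm-ub}; every other step is routine once that is in hand.
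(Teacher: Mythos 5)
Your proposal is correct and follows essentially the same route as the paper: rewrite the second Ginzburg--Landau equation as a Poisson equation for $\Ab-\Fb$, bound the source in $L^2$ using $|\psi|\le 1$ together with the energy upper bound of Theorem~\ref{thm-ub} (and the curl bound \eqref{eq-Ab=Fb}), then conclude by elliptic regularity and the embedding $H^2(\Omega)\hookrightarrow C^{0,1/2}(\overline{\Omega})$. The only difference is cosmetic: the paper cites a local $H^2$ elliptic estimate on a ball $K\supset\Omega$ combined with the curl--div $L^6$ bound, whereas you assemble the $H^2(\Omega)$ norm from the Fourier/Calder\'on--Zygmund identity $\|D^2(\Ab-\Fb)\|_{L^2(\R^3)}\le\|\Delta(\Ab-\Fb)\|_{L^2(\R^3)}$, the curl--div identity for the gradient, and Sobolev embedding for the zeroth-order piece---both are standard implementations of the same elliptic step.
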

\begin{proof}
The estimate in $C^{0,1/2}$-norm is a consequence of the Sobolev embedding
of $H^2(\Omega)$ in $C^{0,1/2}(\Omega)$\,.

Notice that it follows from Theorem~\ref{thm-ub} that,
\begin{equation}\label{eq-est4}
\|\curl(\Ab-\Fb)\|_{L^2(\R^3)}\leq \frac{2|\Omega|}{\sqrt{\kappa H}}\sqrt{\ln\sqrt{\frac{\kappa}{H}}}\,,\quad
\|(\nabla-i\kappa H\Ab)\psi\|_{L^2(\Omega)}\leq 2|\Omega|\sqrt{\kappa H}\,\sqrt{\ln\sqrt{\frac{\kappa}{H}}}\,.\end{equation}

Let $a=\Ab-\Fb$. We will prove that $\|a\|_{H^2(\Omega)}\leq \displaystyle\frac{C}{\sqrt{\kappa H}}\sqrt{\ln\sqrt{\frac{\kappa}{H}}}$\,.
Since $\Div a=0$, we get
by regularity of the curl-div system  (see e.g. \cite[Theorem~D.3.1]{FH-b}),
\begin{equation}
\|a\|_{L^6(\R^3)}\leq C\|\curl a\|_{L^2(\R^3)}\,.\label{eq-est3}
\end{equation}
The second equation in \eqref{eq-3D-GLeq} reads as follows,
$$-\Delta a=\frac1{\kappa H}\IM(\overline\psi\,(\nabla-i\kappa H\Ab)\psi)\mathbf 1_\Omega\,.$$
Select a positive constant $M$ such that the open ball  $K=B(0,M)$ contains $\Omega$. By elliptic estimates (see e.g. \cite[Theorem~E.4.2]{FH-b}),
$$\|a\|_{H^2(\Omega)}\leq C(\|a\|_{L^2(K)}+\|\Delta a\|_{L^2(K)})\,.$$
Using the embedding of $L^2(K)$ into $L^6(K)$, the estimate in \eqref{eq-est3} and the bound $|\psi|\leq1$, we get that,
$$\|a\|_{H^2(\Omega)}\leq C\left(\|\curl a\|_{L^2(\R^3)}+\frac{1}{\kappa H}\|(\nabla-i\kappa H)\psi\|_{L^2(\Omega)}\right)\,.$$
Inserting the estimates in \eqref{eq-est4} into this upper bound finishes the proof of the proposition.
\end{proof}

\section{Lower bound of the energy}\label{sec:lb}

In this section, we suppose that $D$ is an open set with smooth boundary such that $D\subset\Omega$. We will give a lower bound of the energy,
\begin{equation}\label{eq-en-E0}
\mathcal E_0(\psi,\Ab;D)=\int_D\left(|(\nabla-i\kappa H\Ab)\psi|^2+\frac{\kappa^2}2(1-|\psi|^2)^2\right)\,dx\,,
\end{equation}
where $(\psi,\Ab)$ is a minimizer of the functional in \eqref{eq-3D-GLf}. The precise statement is the subject of the next theorem.

\begin{thm}\label{thm-lb}
Suppose that the magnetic field $H$ is a function of $\kappa$ such that $\ln\kappa\ll\kappa H\ll\kappa^2$ as $\kappa\to\infty$.
If $(\psi,\Ab)\in H^1(\Omega;\C)\times \dot H^1_{\Div,\Fb}(\R^3)$ is a minimizer of the function in \eqref{eq-3D-GLf}, then,
$$
\mathcal E_0(\psi,\Ab;D)\geq |D|\kappa H\ln\sqrt{\frac{\kappa}{H}}+o\bigg(\kappa H\ln\sqrt{\frac{\kappa}{H}}\,\bigg)\,,$$
as $\kappa\to\infty$. Here $\mathcal E_0(\psi,\Ab;D)$ is introduced in \eqref{eq-en-E0}.
\end{thm}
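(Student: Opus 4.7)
The plan is a semi-classical localization argument that reduces the lower bound to the model cube estimate of Theorem~\ref{thm-3D-gse}. First I would choose a mesoscopic scale $\ell=\ell(\kappa)$ with $\ell\to 0$ and $\ell\sqrt{\kappa H}\to\infty$; a concrete admissible choice is $\ell=(\ln\kappa/(\kappa H))^{1/4}$, which is well-defined under the hypothesis $\ln\kappa\ll\kappa H$. I would tile $\R^3$ by closed cubes $Q_j=x_j+[-\ell/2,\ell/2]^3$, let $\mathcal J=\{j:Q_j\subset D\}$, and use that $\partial D$ is smooth to get $|D\setminus\bigcup_{j\in\mathcal J}Q_j|=O(\ell)|\partial D|\to 0$. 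By non-negativity of the integrand,
$$\mathcal E_0(\psi,\Ab;D)\;\geq\;\sum_{j\in\mathcal J}\mathcal E_0(\psi,\Ab;Q_j),$$
so it suffices to prove $\mathcal E_0(\psi,\Ab;Q_j)\geq |Q_j|\kappa H\ln\sqrt{\kappa/H}\,(1-o(1))$ uniformly in $j\in\mathcal J$.

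On each cube I would replace $\Ab$ by $\Fb$ in two steps. Set $a=\Ab-\Fb$; the constant part $a(x_j)$ is removed by the gauge transformation $\tilde\psi(x)=e^{-i\kappa H\,a(x_j)\cdot x}\psi(x)$, which preserves $|\tilde\psi|=|\psi|$ and yields $(\nabla-i\kappa H(\Fb+a(x_j)))\psi = e^{i\kappa H a(x_j)\cdot x}(\nabla-i\kappa H\Fb)\tilde\psi$. The remainder $a(x)-a(x_j)$ is absorbed via the inequality $|v-w|^2\geq(1-\eta)|v|^2-\eta^{-1}|w|^2$, giving
$$|(\nabla-i\kappa H\Ab)\psi|^2\;\geq\;(1-\eta)\,|(\nabla-i\kappa H\Fb)\tilde\psi|^2-\eta^{-1}(\kappa H)^2|a(x)-a(x_j)|^2|\psi|^2,$$
and the error is bounded using the $C^{0,1/2}$-estimate of Proposition~\ref{prop-ap}: $|a(x)-a(x_j)|\leq C(\kappa H)^{-1/2}\sqrt{\ln\sqrt{\kappa/H}}\,\ell^{1/2}$ for $x\in Q_j$. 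Next I would rescale $y=\sqrt{\kappa H}(x-x_j)$ and gauge away $\kappa H\Fb(x_j)$; a direct computation using $\Fb(x)=\Fb(x_j)+\Fb(y)/\sqrt{\kappa H}$ transforms the energy on $Q_j$ into
$$\mathcal E_0(\tilde\psi,\Fb;Q_j)=(\kappa H)^{-3/2}\kappa^2\,G_{Q_R}(\hat\psi)\;\geq\;(\kappa H)^{-3/2}\kappa^2\,M_0(b,R),$$
with $R=\ell\sqrt{\kappa H}$ and $b=H/\kappa$.

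For the chosen $\ell$ one verifies $b\to 0$, $R\to\infty$, and $R^{-2}\ln(R/\sqrt{b})\sim\sqrt{\ln\kappa/(\kappa H)}\to 0$, so Theorem~\ref{thm-3D-gse} gives $M_0(b,R)\geq R^3 b\ln(1/\sqrt{b})\,(1+o(1))$, which unravels to $|Q_j|\kappa H\ln\sqrt{\kappa/H}\,(1+o(1))$. Taking $\eta=\sqrt{\ell}\to 0$, both the $\eta$-loss $\eta|D|\kappa H\ln\sqrt{\kappa/H}$ on the main term and the remainder error $\eta^{-1}\ell\,|D|\kappa H\ln\sqrt{\kappa/H}=\sqrt{\ell}\,|D|\kappa H\ln\sqrt{\kappa/H}$ are each $o(|D|\kappa H\ln\sqrt{\kappa/H})$, and the boundary strip contributes $O(\ell)|\partial D|\kappa H\ln\sqrt{\kappa/H}=o(\kappa H\ln\sqrt{\kappa/H})$ as well, which closes the estimate. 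The hard part is the substitution $\Ab\to\Fb$: the bare bound $(\kappa H)^2\|\Ab-\Fb\|_\infty^2$ coming from Proposition~\ref{prop-ap} is of the same order $\kappa H\ln\sqrt{\kappa/H}$ as the main term, so it is essential to first gauge away the constant $a(x_j)$ and then use the H\"older modulus at the small scale $\ell$ to beat down the error to genuinely subleading size; this requires balancing $\ell$ and $\eta$ against the logarithmic thresholds prescribed by Theorem~\ref{thm-3D-gse}.
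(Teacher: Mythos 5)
Your proposal is correct and follows essentially the same route as the paper: tiling $D$ by cubes of side $\ell$ with $(\kappa H)^{-1/2}\ll\ell\ll1$, gauging away the constant part of $\Ab-\Fb$ on each cube, absorbing the remaining oscillation through the $C^{0,1/2}$ bound of Proposition~\ref{prop-ap} with a splitting parameter chosen as $\sqrt{\ell}$, rescaling to $G_{Q_R}$ with $R=\ell\sqrt{\kappa H}$ and $b=H/\kappa$, and invoking Theorem~\ref{thm-3D-gse}. The only deviation is your choice $\ell=(\ln\kappa/(\kappa H))^{1/4}$ in place of the paper's $\ell=(\kappa H/\ln\kappa)^{1/4}(\kappa H)^{-1/2}$, which is equally admissible (and in fact makes the verification of $R^{-2}\ln(R/\sqrt{b})\to0$ transparent in the whole regime $\ln\kappa\ll\kappa H$), so no substantive difference in method.
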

\begin{proof}
Let $\ell\in(0,1)$ be a parameter (depending on $\kappa$) that will be chosen later in such a way that $(\sqrt{\kappa H})^{-1}\ll\ell\ll1$ as $\kappa\to\infty$. Consider a  lattice $(\mathcal Q_j)_j$ of $\R^3$ generated by the cube,
$$Q_\ell=(-\ell/2,\ell/2)\times(-\ell/2,\ell/2)\times(-\ell/2,\ell/2)\,.$$
Let $\mathcal J=\{j~:~\mathcal Q_j\subset D\}$ and $N={\rm Card}\,\mathcal J$. Then, as $\kappa\to\infty$, the natural number $N$ satisfies,
\begin{equation}\label{eq-N}
N=\frac{|D|}{\ell^3}+o\left(\frac1{\ell^3}\right)\,.
\end{equation}
Moreover, we have the lower bound,
\begin{equation}\label{eq-lb1}
\mathcal E_0(\psi,\Ab;D)\geq \sum_{j\in\mathcal J}\mathcal E_0(\psi,\Ab;\mathcal Q_j)\,.
\end{equation}
For each $j\in\mathcal J$, we will bound from below the term $\mathcal E_0(\psi,\Ab;\mathcal Q_j)$.
Let $x_j$ be the center of the cube $\mathcal Q_j$. Using the estimate of $\|\Ab-\Fb\|_{C^{0,1/2}(\Omega)}$ given in Proposition~\ref{prop-ap}, we may write for all $x\in\mathcal Q_j$,
$$|\Ab(x)-\Fb(x)-(\Ab(x_j)-\Fb(x_j))|\leq C\lambda \ell^{1/2}\,,$$
where $C$ is a constant that is independent of $j$, $x$ and $\kappa$,  and the parameter $\lambda$ is defined by,
\begin{equation}\label{eq-lambda}
\lambda=\frac1{\sqrt{\kappa H}}\sqrt{\ln\sqrt{\frac{\kappa}H}}\,.
\end{equation}
We define $\varphi_j(x)=(\Ab(x_j)-\Fb(x_j))\cdot x$, $u_j(x)=e^{i\varphi(x)}\psi(x)$ and $\ab_j(x)=\Ab(x)-\nabla\varphi_j(x)$. Then we may write,
\begin{equation}\label{eq-est-mp}
\forall~x\in\mathcal Q_j\,,\quad |\ab_j(x)-\Fb(x)|\leq C\lambda \ell^{1/2}\,,
\end{equation}
and
\begin{equation}\label{eq-gauge}
\mathcal E_0(\psi,\Ab;\mathcal Q_j)=\mathcal E_0(u_j,\ab_j;\mathcal Q_j)\,.
\end{equation}
We may write, for all $\delta\in(0,1)$,
$$|(\nabla-i\kappa H\ab_j)u_j|^2\geq (1-\delta)|(\nabla-i\kappa H\Fb)u_j|^2-2\delta^{-1}(\kappa H)^2|\ab_j-\Fb_j|^2|u_j|^2\,.$$
We insert this estimate into the expression of $\mathcal E_0(u_j,\ab_j;\mathcal Q_j)$ then we use the estimate in \eqref{eq-est-mp} and that $|u_j|=|\psi|$ to get,
\begin{equation}\label{eq-lb2}
\mathcal E_0(u_j,\ab_j;\mathcal Q_j)\geq (1-\delta)\mathcal E_0(u_j,\Fb;\mathcal Q_j)-C\delta^{-1}(\kappa H)^2\lambda^2\ell\int_{\mathcal Q_j}|\psi|^2\,dx\,.
\end{equation}
Let $R=\ell\sqrt{\kappa H}$ and $b=H/\kappa$. For all $x\in\R^3$ such that $|x|\leq R$, we define,
$$v_j(x)=u\left(x_j+\frac{x}{\sqrt{\kappa H}}\right)\,.$$
Then a simple change of variable shows that,
\begin{equation}\label{eq-lb3}
\mathcal E_0(u_j,\Fb;\mathcal Q_j)=\frac1{b\sqrt{\kappa H}}\,G_{Q_R}(v_j)\,,
\end{equation}
where $G_{Q_R}$ is the functional in \eqref{eq-3D-en} and $Q_R$ is the cube in \eqref{eq-QR}.

We select  $\ell$ in the following way,
\begin{equation}\label{eq-ell}
\ell=\left(\frac{\kappa H}{\ln\kappa}\right)^{1/4}\frac1{\sqrt{\kappa H}}\,.
\end{equation}
With this choice, we have $(\sqrt{\kappa H})^{-1}\ll\ell\ll1$, $1\ll R$ and $\displaystyle\frac1{R^2}\ln\frac{R}{\sqrt{b}}\ll1$ as $\kappa\to\infty$.
Consequently, Theorem~\ref{thm-3D-gse} tells us that  the ground state  $M_0(b,R)$ in \eqref{eq-3D-gse} satisfies
$$M_0(b,R)=bR^3\ln\frac1{\sqrt{b}}\,\big(1+o(1)\big)\,.$$
 Since $v_j\in H^1(Q_R)$, we get $G_{Q_R}(v_j)\geq M_0(b,R)$. Substituting this into \eqref{eq-lb3} and using the aforementioned asymptotic expansion of $M_0(b,R)$, we get,
\begin{equation}\label{eq-lb4}
\mathcal E_0(u_j,\Fb;\mathcal Q_j)=\frac{R^3}{\sqrt{\kappa H}}\,\ln\frac1{\sqrt{b}}\,\big(1+o(1)\big)\,.
\end{equation}
By inserting \eqref{eq-lb4} into \eqref{eq-lb2} and using \eqref{eq-gauge}, we get for all $j\in\mathcal J$,
$$\mathcal E(\psi,\Ab;\mathcal Q_j)\geq
(1-\delta)\frac{R^3}{\sqrt{\kappa H}}\,\ln\frac1{\sqrt{b}}\,\big(1+o(1)\big)-C\delta^{-1}\ell(\kappa H)^2\lambda^2\int_{\mathcal Q_j}|\psi|^2\,dx\,.$$
Taking the sum over $j\in\mathcal J$ and using \eqref{eq-lb1}, we get,
\begin{equation}\label{eq-lb5}
\mathcal E(\psi,\Ab;D)\geq
(1-\delta)\, N\times\frac{R^3}{\sqrt{\kappa H}}\,\ln\frac1{\sqrt{b}}\,\big(1+o(1)\big)-C\delta^{-1}\ell(\kappa H)^2\lambda^2\int_{D}|\psi|^2\,dx\,,\end{equation}
where $N={\rm Card}\,\mathcal J$.
To finish the proof, we use the bound $|\psi|\leq 1$, the definition of $\lambda$ in \eqref{eq-lambda}, and we choose $\delta=\ell^{1/2}$. This gives that the remainder term in \eqref{eq-lb5} is equal to $o(\kappa H\ln\sqrt{\kappa/H}\,)$\,. For the leading order term in \eqref{eq-lb5}, we  use the asymptotic expansion of $N$ in \eqref{eq-N}, that $R=\ell\sqrt{\kappa H}$, and we observe that it is equal to
$$\kappa H\ln\sqrt{\frac{\kappa}H} \,\big(1+o(1)\big)\,.$$
\end{proof}

\begin{proof}[Proof of Theorem~\ref{thm-3D-main}]
Combining the upper bound in Theorem~\ref{thm-ub} and the lower bound in Theorem~\ref{thm-lb} with $D=\Omega$, we get the estimate of the ground state energy in Theorem~\ref{thm-3D-main}.
\end{proof}

\begin{proof}[Proof of Corollary~\ref{corol-main}]
The convergence of $\curl \Ab-\beta$ in $L^2(\R^3;\R^3)$ is proved in Theorem~\ref{thm-ub}.
Since $\Div(\Ab-\Fb)=0$ and $\Ab-\Fb\in \dot H^1(\R^3;\R^3)$, we get that,
$$\|\nabla\curl(\ab-\Fb)\|_{L^2(\R^3)}=\|\curl(\Ab-\Fb)\|_{L^2(\R^3)}\,.$$
Consequently, it results from the convergence of $\curl\Ab$ in $L^2(\R^3)$ that $\curl\Ab\to\beta$ in $H^1(\R^3;\R^3)$.

We prove the convergence of
$\mu_{\kappa,H}(\psi,\Ab)$.
Let $\Bb(x)=\curl\Ab(x)$. Since $\Div\Ab=0$, it results by taking the $\curl$ on both sides of the second equation in \eqref{eq-3D-GLeq},
$$-\Delta \Bb+\Bb=\mu_{\kappa,H}(\psi,\Ab)\quad{\rm in~}\,\Omega\,.
$$
Since $\Bb\to\beta$ in $H^1(\R^3;\R^3)$, we get that $-\Delta \Bb+\Bb\to\beta\,dx$ in $\mathcal D'(\R^3;\R^3)$.

It remains to prove the convergence of the measure $e_{\kappa,H}(\psi,\Ab)$. It suffices to prove
that $e_{\kappa,H}(\psi,\Ab)\to dx$ in the sense of measures. If $D$ is any open set in $\Omega$
with smooth boundary, then we have by Theorem~\ref{thm-lb}\,,
$$\mathcal E_0(\psi,\Ab;D)\geq |D|\kappa H\ln\sqrt{\frac{\kappa}{H}}\,\big(1+o(1)\big)\,,\quad
\mathcal E_0(\psi,\Ab;\Omega\setminus D)\geq |\Omega\setminus
D|\kappa H\ln\sqrt{\frac{\kappa}{H}}\,\big(1+o(1)\big)\,.$$ Here
$\mathcal E_0(\psi,\Ab;D)$ is introduced in \eqref{eq-en-E0}.
Recall the functional $\mathcal E^{\rm 3D}$ in \eqref{eq-3D-GLf}.
Since \begin{align*} \mathcal E_0(\psi,\Ab;D)+\mathcal
E_0(\psi,\Ab;\Omega\setminus D)&=\mathcal E^{\rm 3D}(\psi,\Ab)\\
&\leq \kappa
H|\Omega|\ln\sqrt{\frac{\kappa}H}\,\big(1+o(1)\big)\,,\end{align*}
 we
infer from Theorem~\ref{thm-ub},
$$ \mathcal E_0(\psi,\Ab;D)=|D|\kappa H\ln\sqrt{\frac{\kappa}{H}}\,\big(1+o(1)\big)\,.$$
This is sufficient to conclude the convergence of $e_{\kappa,H}(\psi,\Ab)$ to $dx$ in the sense of measures.
\end{proof}

\section*{Acknowledgements}
The author is partially supported by the Lundbeck foundation and by a grant from  the Lebanese University.

\end{document}